\newtheorem{theorem}{Theorem}
\newtheorem*{theorem*}{Theorem}
\newtheorem{lemma}[theorem]{Lemma}
\newtheorem{definition}[theorem]{Definition}
\newtheorem{corollary}[theorem]{Corollary}
\newtheorem{example}[theorem]{Example}
\newtheorem{remark}[theorem]{Remark}
\newtheorem{proposition}[theorem]{Proposition}
\newcommand{\uproman}[1]{\uppercase\expandafter{\romannumeral#1}}
\providecommand{\keywords}[1]
{
  \small	
  \textbf{\textit{Keywords---}} #1
}
\title{Strong Embeddings of 3-Connected Cubic Planar Graphs on Surfaces of non-negative Euler Characteristic}
\date{}
\author[$\star$]{Meike Weiß}
\author[$\star$]{Alice C. Niemeyer}
\affil[$\star$]{\small{RWTH Aachen University, Chair of Algebra and Representation Theory, Pontdriesch 10-16, 52062 Aachen, Germany}}
\begin{document}
\maketitle
\begin{abstract}
Whitney proved that $3$-connected planar graphs admit a unique embedding on the sphere. In contrast, Enami investigated embeddings of $3$-connected cubic planar graphs on non-spherical surfaces with non-negative Euler characteristic. He established that such an embedding exists if and only if the dual graph contains a particular subgraph. 
Here, strong embeddings are investigated motivated by the cycle double cover conjecture and the relation to triangulated surfaces. We provide a complete characterization of strong embeddings on the projective plane, the torus, and the Klein bottle in terms of a distinguished subset of Enami’s subgraphs.
This characterization not only deepens the structural understanding of graph embeddings on non-spherical surfaces, but also establishes a robust foundation for computing cycle double covers. As a direct consequence, we derive explicit criteria that determine when a graph does not admit a strong embedding on these surfaces—offering new tools for both theoretical analysis and algorithmic applications.
\end{abstract}

\keywords{strong embeddings, facial walks, planar, cubic}

\section{Introduction} \label{sec: intro}
Combinatorial structures such as graphs are ubiquitous in mathematics. An important part of graph theory is graph topology, which considers embeddings of graphs on surfaces.
One of the earliest foundational results in this area was established by Whitney \cite{Whitney}, who proved that every $3$-connected planar graph admits a unique embedding on the sphere.
Building on this foundational result, subsequent research has extended embeddings of planar graphs on surfaces beyond the sphere, including those of higher genus and non-orientable types, as explored in \cite{MOHAR199687} and \cite{MR1375105}.
In \cite{EnamiEmbeddings}, Enami considers embeddings of $3$-connected cubic planar graphs on the projective plane, the torus and the Klein bottle. His approach involves re-embedding planar graphs on these non-spherical surfaces and introduces an enumeration algorithm capable of computing the number of all such re-embeddings. Notably, the algorithm guarantees polynomial-time complexity with respect to the number of vertices of the input graph.
A particularly important class of graph embeddings—closely related to the \textbf{cycle double cover conjecture} \cite{CycleDoubleCoverConjecture}—are known as \textbf{strong graph embeddings}, or circular embeddings.
These are embeddings in which every facial walk forms a cycle. Richter et al.\ \cite{RICHTER1994273} showed that every $3$-connected planar graph admits a strong embedding on a non-spherical surface. Interestingly, certain cubic graphs require surfaces of arbitrarily high genus to admit strong embeddings, as demonstrated in \cite{MoharStrong}.

Our results on strong graph embeddings are motivated by applications to \textbf{simplicial surfaces} and establish a foundation for further significant developments in the theory of strong embeddings.
Roughly speaking, simplicial surfaces describe the incidence structure of special triangulated surfaces.
The undirected graph formed by the edges and faces of a simplicial surface is called the \textbf{face graph}, which is necessarily cubic due to the assumption of triangulated surfaces.
Given a cubic graph $G$, our goal is to compute all simplicial surfaces which have $G$ as their face graph. This task is equivalent to determining all strong graph embeddings of $G$. The results presented here represent a significant step towards achieving our goal.

Based on the results of Enami \cite{EnamiEmbeddings} we prove a characterization for strong embeddings of $3$-connected cubic planar graphs on the projective plane, the torus and the Klein bottle as stated in Theorem \ref{theorem:main}. This strengthening necessitated a more rigorous treatment of the embedding scheme and the resulting facial walks. For this purpose, let $K_n$ denote the complete graph on $n$ vertices and, for a positive integer $k\geq 2$, let $K_{n_1,n_2,\dots, n_k}$ denote the complete $k$-partite graph with $k$ partition sets $V_1, V_2,\dots, V_k$ such that $\vert V_i\vert=n_i$ for $1\leq i \leq k$. 
We further denote by $A_3, A_5$ and $A_6$ the graphs shown in Figure~\ref{fig:SubgraphsKleinBottle}. Throughout the paper, $G^\ast$ always refers to the \textbf{dual graph} with respect to the planar embedding of a $3$-connected planar graph $G$. Whitney’s theorem ensures that this dual graph is unique and, moreover, simple, planar, and 3‑connected. With this notation in place, we state the main result of this paper.

\begin{theorem} \label{theorem:main}
Let $G$ be a $3$-connected cubic planar graph. There exists a one-to-one correspondence between inequivalent strong embeddings of $G$ on
    \begin{itemize}
        \item[1)] the projective plane and subgraphs of $G^{\ast}$ that are isomorphic to $K_4$.
        \item[2)] the torus and subgraphs of $G^{\ast}$ that are isomorphic to $K_{2,2,2}$ or isomorphic to $K_{2,2m}$ for $m\geq 1$, where for $K_{2,2m}$ the vertices in the partition sets of size two are not adjacent in $G^{\ast}$.
        \item[3)] the Klein bottle and subgraphs of $G^{\ast}$ that are isomorphic to $A_3, A_5, A_6$ or $K_{2,2m-1}$ for $m\geq 2$, where for $K_{2,2m-1}$ the vertices in the partition set of size two are not adjacent in $G^{\ast}$.
    \end{itemize}
\end{theorem}
\begin{figure}[H]
    \centering
    \begin{subfigure}{0.32\textwidth}
        \centering
        \begin{tikzpicture}[scale=1.8]
            \tikzset{knoten/.style={circle,fill=black,inner sep=0.7mm}}
            \node [knoten] (a) at (0,0) {};
            \node [knoten] (b) at (1,0) {};
            \node [knoten] (c) at (0.5,0.5) {};
            \node [knoten] (d) at (0.5,1) {};
            \node [knoten] (e) at (1-0.25,1) {};
            \node [knoten] (f) at (2-0.25,1) {};
            \node [knoten] (g) at (1.5-0.25,0.5) {};
            \node [knoten] (h) at (1.5-0.25,0) {};
            
            \draw[-,thick] (a) to (b);
            \draw[-,thick] (a) to (c);
            \draw[-,thick] (a) to (d);
            \draw[-,thick] (b) to (c);
            \draw[-,thick] (b) to (d);
            \draw[-,thick] (c) to (d);
            
            \draw[-,thick] (e) to (f);
            \draw[-,thick] (e) to (g);
            \draw[-,thick] (e) to (h);
            \draw[-,thick] (f) to (g);
            \draw[-,thick] (f) to (h);
            \draw[-,thick] (g) to (h);
        
        \end{tikzpicture}
        \caption{$A_3$}
    \end{subfigure}
    \begin{subfigure}{0.32\textwidth}
        \centering
        \begin{tikzpicture}[scale=1.8]
            \tikzset{knoten/.style={circle,fill=black,inner sep=0.7mm}}
            \node [knoten] (a) at (0,0) {};
            \node [knoten] (b) at (0,1) {};
            \node [knoten] (c) at (0.5,0.5) {};
            \node [knoten] (d) at (1,0.5) {};
            \node [knoten] (e) at (1.5,0.5) {};
            \node [knoten] (f) at (2,1) {};
            \node [knoten] (g) at (2,0) {};
            
            \draw[-,thick] (a) to (b);
            \draw[-,thick] (a) to (c);
            \draw[-,thick] (a) to (d);
            \draw[-,thick] (b) to (c);
            \draw[-,thick] (b) to (d);
            \draw[-,thick] (c) to (d);
            
            \draw[-,thick] (e) to (f);
            \draw[-,thick] (e) to (g);
            \draw[-,thick] (e) to (d);
            \draw[-,thick] (f) to (g);
            \draw[-,thick] (f) to (d);
            \draw[-,thick] (g) to (d);
        \end{tikzpicture}
        \caption{$A_5$}
    \end{subfigure}
    \begin{subfigure}{0.32\textwidth}
        \centering
        \begin{tikzpicture}[scale=1.8]
            \tikzset{knoten/.style={circle,fill=black,inner sep=0.7mm}}
            \node [knoten] (a) at (0,0) {};
            \node [knoten] (b) at (0.5,0) {};
            \node [knoten] (c) at (1,0.5) {};
            \node [knoten] (d) at (1,-0.5) {};
            \node [knoten] (e) at (1.5,0) {};
            \node [knoten] (f) at (2,0) {};
            
            \draw[-,thick] (a) to (b);
            \draw[-,thick] (a) to (c);
            \draw[-,thick] (a) to (d);
            \draw[-,thick] (b) to (c);
            \draw[-,thick] (b) to (d);
            \draw[-,thick] (c) to (e);
            \draw[-,thick] (c) to (f);
            \draw[-,thick] (d) to (e);
            \draw[-,thick] (d) to (f);
            \draw[-,thick] (e) to (f);
        \end{tikzpicture}
        \caption{$A_6$}
    \end{subfigure}
    \caption{The three graphs $A_3$ (a), $A_5$ (b) and $A_6$ (c)}
    \label{fig:SubgraphsKleinBottle}
\end{figure}

This main result introduces a novel method for computing strong embeddings and cycle double covers. Building on this framework, we implemented an algorithm for computing strong embeddings on surfaces of non-negative Euler characteristic within the computer algebra system GAP \cite{GAP4}. These methods are available in the SimplicialSurfaces GAP package \cite{simplicialsurfacegap}.
Furthermore, the underlying characterization forms the basis for existence results concerning strong embeddings of specific $3$-connected cubic planar graphs. The theoretical framework of visited edges established here extends naturally to embedding problems on surfaces of higher genus. In a forthcoming paper, we build on these findings to demonstrate that the dual graphs of Apollonian networks precisely characterize the class of $3$-connected cubic planar graphs that admit a unique strong embedding on an orientable surface \cite{weißakpanya}.

\section{Preliminaries} \label{sec: prelim}
In this section, we first consider graph embeddings and review a combinatorial way of describing them. Then we examine embeddings of $3$-connected cubic planar graphs on non-spherical surfaces of non-negative Euler characteristic.
We assume that all our graphs are undirected, connected, simple and finite. Let $G$ be such a graph. Then we denote by $V(G)$ the vertices of $G$ and by $E(G)$ the edges of $G$ which are 2-subsets of $V(G)$.

\subsection{Graph Embeddings}\label{sec:graphembeddings}
We adopt the standard terminology of topological graph theory as established in \cite{TopologicalGraphTheory,GraphsOnSurfaces}.
A \textbf{surface} is understood to be a compact two-dimensional manifold without boundary. An \textbf{embedding} of a graph $G$ on a surface $S$ refers to a drawing of $G$ on $S$ without edge crossings. Formally, we regard an embedding as an injective continuous map $\beta:G\rightarrow S$. The \textbf{cells} of $\beta(G)$ are defined as the connected components of $S\setminus\beta(G)$. Throughout this paper, we assume that all embeddings are cellular embeddings, meaning that each cell is homeomorphic to an open disc. For convenience, we often identify $G$ with its image under $\beta$, assuming it is already embedded on $S$.
A closed walk of an embedded graph is called a \textbf{facial walk} if it traverses the boundary of a cell. If a facial walk has distinct vertices and edges, it is a \textbf{facial cycle}. Note that each edge of $G$ is contained exactly twice in the set of facial walks. If all facial walks are facial cycles, then each edge is contained in exactly two facial cycles. 
If $G$ is a planar graph, we call the cells of the embedding on the plane \textbf{faces}.
To describe a graph embedding combinatorially, we employ rotation systems and signatures. Since they play an important role in this work, we repeat the definitions here.

\begin{definition}
    Suppose that a graph $G$ is embedded on an orientable surface. A \textbf{\emph{rotation}} $\rho_v$ around a vertex $v\in V(G)$ is a circular ordering of the edges incident to $v$ such that $\rho_v(e)$ is the clockwise successor of $e$ around $v$. Analogously, $\rho^{-1}_v(e)$ is the anticlockwise successor of $e$ around $v$. A \textbf{\emph{rotation system}} of the embedded graph $G$ is defined by $\rho=\{\rho_v\mid v\in V(G)\}$.
\end{definition}

Two rotation systems $\rho$ and $\rho'$ of $G$ are equivalent if either they are the same or if $\rho'_v=\rho^{-1}_v$ for all $v\in V(G)$.
A rotation system of a graph $G$ uniquely describes the embedding of $G$ on an orientable surface, and an embedding of $G$ on an orientable surface uniquely describes a rotation system up to equivalence. The situation, however, is different for non-orientable surfaces, where more information is needed. Therefore, the notion of rotation systems is extended to embedding schemes.

\begin{definition}\label{def:embeddingscheme}
    Let $\beta(G)$ be an embedding of $G$ on a not necessarily orientable surface and $\rho=\{\rho_v\mid v\in V(G)\}$ a rotation system of $G$.
    A \textbf{\emph{signature}} on $E(G)$ is a map $\lambda: E(G)\rightarrow \{-1,1\}$. For an edge $e=\{v,w\}$ we define $\lambda(e)=1$ if $\rho_v(e), e$ and $\rho_w^{-1}(e)$ are included in a facial walk of $\beta(G)$ as a subwalk; otherwise $\rho_v(e),e$ and $\rho_w(e)$ are included in a facial walk of $\beta(G)$ as a subwalk and we define $\lambda(e)=-1$. The pair $(\rho,\lambda)$ is called the \textbf{\emph{embedding scheme}} of $\beta(G)$ and an edge $e\in E(G)$ is called \textbf{\emph{twisted}} if $\lambda(e)=-1$ and \textbf{\emph{untwisted}} if $\lambda(e)=1$.
\end{definition}

Unlike in the orientable case, an embedding scheme is not uniquely determined by an embedding of $G$. For instance, if we change the clockwise ordering at a vertex $v\in V(G)$ to anticlockwise and for all edges $e$ incident to $v$ we change $\lambda(e)$ to $-\lambda(e)$, the corresponding embedding does not change. This change in the embedding scheme is referred to as a \textbf{local change}.
Two embedding schemes are called \textbf{equivalent} if they can be obtained from one another by a sequence of local changes. Note that an embedding scheme defines an embedding of a given graph $G$ on a non-orientable surface if and only if there is a cycle in $G$ containing an odd number of twisted edges.

With the combinatorial definition of a graph embedding provided by an embedding scheme, the corresponding facial walks can be computed by the \textbf{face traversal algorithm}.
At each step of this algorithm we let $\kappa$ denote whether we follow the clockwise successor at the current vertex ($\kappa =1$) or the anticlockwise successor ($\kappa=-1$). Whenever the algorithm traverses an edge $e$ with $\lambda(e)=-1$, the sign of $\kappa$ changes.
To compute a facial walk of a graph $G$ with an embedding scheme $(\rho,\lambda)$, we begin by choosing an edge $e=\{v,w\}\in E(G)$ and setting $\kappa=1$. 
We then traverse $e$ from $v$ to $w$. If $\lambda(e)=1$, the walk proceeds with the clockwise successor of $e$ at $w$, namely $\rho_w(e)$. If $\lambda(e)=-1$, the walk instead follows the anticlockwise successor $\rho_w^{-1}(e)$ at $w$ and the value of $\kappa$ changes to $-1$.
We proceed analogously: we traverse edges by following the successor prescribed by $\kappa$ and $\rho$ until we encounter a twisted edge, at which point the sign of $\kappa$ changes, as illustrated in Figure~\ref{fig:twistedArcs}.
The facial walk is completed once we return to $v$ in such a way that the next edge to be traversed would be the
starting edge $e$ and with $\kappa=1$.
In this way, a single facial walk of the given embedding is obtained. To compute all facial walks of the embedding defined by $(\rho,\lambda)$, we then select another starting edge that has not yet been traversed twice by previously constructed facial walks.
Further details of this well-known algorithm can be found in \cite[Chapter 3.3.]{GraphsOnSurfaces}, for example.

\begin{figure}[h!]
    \centering
    \begin{subfigure}{0.2\textwidth}
        \centering
        \begin{tikzpicture}[scale=1.]
            \tikzset{knoten/.style={circle,fill=black,inner sep=0.7mm}}
            \node [knoten] (a) at (0,0) {};
            \node [knoten] (b) at (0,-1) {};
            \node [knoten] (c) at ({1/sqrt(2)},{1/sqrt(2)}) {};
            \node [knoten] (d) at (-{1/sqrt(2)},{1/sqrt(2)}) {};
            \node [knoten] (e) at ({1/sqrt(2)},{-1/sqrt(2)-1}) {};
            \node [knoten] (f) at (-{1/sqrt(2)},{-1/sqrt(2)-1}) {};
    
            \draw[-,thick] (a) to (b);
            \draw[-,thick] (a) to (c);
            \draw[-,thick] (a) to (d);
            \draw[-,thick] (b) to (e);
            \draw[-,thick] (b) to (f);
    
            \node (E) at (0.3,-0.5) {$e$};
    
            \draw[orange,very thick,rounded corners] ({1/sqrt(2)+0.1},{1/sqrt(2)-0.1}) -- (0.1,-0.1) -- (0.1,-0.9) -- ({1/sqrt(2)+0.1},{-1/sqrt(2)-0.9});
            \draw[blue,very thick,rounded corners] ({-1/sqrt(2)-0.1},{1/sqrt(2)-0.1}) -- (-0.1,-0.1) -- (-0.1,-0.9) -- ({-1/sqrt(2)-0.1},{-1/sqrt(2)-0.9});
        \end{tikzpicture}
        \caption{}
        \label{subfig:untwistededge}
    \end{subfigure}
    \begin{subfigure}{0.2\textwidth}
        \centering
        \begin{tikzpicture}[scale=1.]
            \tikzset{knoten/.style={circle,fill=black,inner sep=0.7mm}}
            \node [knoten] (a) at (0,0) {};
            \node [knoten] (b) at (0,-1) {};
            \node [knoten] (c) at ({1/sqrt(2)},{1/sqrt(2)}) {};
            \node [knoten] (d) at (-{1/sqrt(2)},{1/sqrt(2)}) {};
            \node [knoten] (e) at ({1/sqrt(2)},{-1/sqrt(2)-1}) {};
            \node [knoten] (f) at (-{1/sqrt(2)},{-1/sqrt(2)-1}) {};
    
            \draw[-,very thick,red] (a) to (b);
            \draw[-,thick] (a) to (c);
            \draw[-,thick] (a) to (d);
            \draw[-,thick] (b) to (e);
            \draw[-,thick] (b) to (f);
    
            \node (E) at (0.3,-0.5) {$e$};
    
            \draw[orange,very thick,rounded corners] ({1/sqrt(2)+0.1},{1/sqrt(2)-0.1}) -- (0.1,-0.1) -- (0.1,-0.5) -- (-0.1,-0.5) -- (-0.1,-0.9) -- ({-1/sqrt(2)-0.1},{-1/sqrt(2)-0.9});
            \draw[blue,very thick,rounded corners] ({-1/sqrt(2)-0.1},{1/sqrt(2)-0.1}) -- (-0.1,-0.1) -- (-0.1,-0.5) -- (0.1,-0.5) -- (0.1,-0.9) -- ({1/sqrt(2)+0.1},{-1/sqrt(2)-0.9});
        \end{tikzpicture}
        \caption{}
        \label{subfig:twistededge}
    \end{subfigure}
    \caption{Segments of facial walks (blue and orange) for $e$ not twisted (a) and $e$ twisted (b).}
    \label{fig:twistedArcs}
\end{figure}

\subsection{Re-embeddings}\label{sec:reembedding}

In this section, we describe how an embedding of a $3$-connected cubic planar graph $G$ on a given non-spherical surface can be characterised via twisted edges. The described approach was first investigated in \cite{EnamiEmbeddings}.
Since a sphere is orientable, a planar embedding of $G$ is uniquely defined by a rotation system $\rho$. In fact, any drawing of $G$ on the plane implicitly defines such a rotation system.
An embedding $\beta$ of $G$ on a surface of higher genus can then be uniquely described by drawing $G$ on the plane and indicating which edges are twisted. In particular, we define a signature $\lambda$ by specifying a set $\mathcal{T}\subseteq E(G)$ with
$$\lambda: E(G)\rightarrow\{-1,1\},\,e\mapsto
    \begin{cases}
        -1 & \text{if } e\in\mathcal{T}\\
        1 & \text{otherwise. }
    \end{cases}
$$
We denote the embedding $\beta$ with the embedding scheme $(\rho,\lambda)$ as $\beta_\mathcal{T}(G)$ to emphasize that the unique planar embedding of $G$ provides the rotation system $\rho$ and that the edges in $\mathcal{T}$ define the signature $\lambda$. We refer to the embedding $\beta_\mathcal{T}(G)$ as a \textbf{re-embedding} of $G$.
The subgraph of $G^{\ast}$ induced by the edges dual to those in $\mathcal{T}$ is denoted by $H_\mathcal{T}$ and referred to as the \textbf{twisted subgraph} of $G^{\ast}$.
Moreover, a twisted subgraph $H_\mathcal{T}$ of $G^\ast$ determines the corresponding set of twisted edges of $G$ by taking the dual edges of $H_\mathcal{T}$.
Note that each set of twisted edges together with the rotation system $\rho$, defines an embedding on some surface. However, to obtain a re-embedding on a specific surface, it is essential to understand how to select the twisted edges appropriately.

In \cite{EnamiEmbeddings}, Enami characterises the sets of twisted edges that yield embeddings on the projective plane, the torus and the Klein bottle.
His approach defines these admissible sets of twisted edges by specific subgraphs of the dual graph, as summarised in the following theorem.
\begin{theorem}\cite[Theorem 1-3]{EnamiEmbeddings} \label{theorem:enami}
Let $G$ be a $3$-connected cubic planar graph. There exists a one-to-one correspondence between inequivalent embeddings of $G$ on
    \begin{itemize}
        \item[1)] the projective plane and subgraphs of $G^{\ast}$ that are isomorphic to $K_2$ or $K_4$.
        \item[2)] the torus and subgraphs of $G^{\ast}$ that are isomorphic to $K_{2,2,2}, K_{2,2m}$ or $K_{1,1,2m-1}$ for $m\geq 1$.
        \item[3)] the Klein bottle and subgraphs of $G^{\ast}$ that are isomorphic to $K_{2,2m-1}$ or $K_{1,1,2m}$ for $m\geq 1$, or one of the six graphs $A_1$ to $A_6$ shown in Figure~\ref{fig:SubgraphsKleinBottle} and~\ref{fig:SubgraphsKleinBottleGeneral}.
    \end{itemize}
\end{theorem}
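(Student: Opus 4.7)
The plan is to reduce the characterisation to a combinatorial analysis of facial walks and then count them via Euler's formula. By Remark~\ref{remark:11correspondence} every re-embedding of $G$ is encoded by a subgraph $H_\mathcal{T}$ of $G^{\ast}$, so the question becomes: which subgraphs force $\beta_\mathcal{T}(G)$ to lie on the projective plane, the torus, or the Klein bottle? A first necessary condition comes from Euler's formula: if $\beta_\mathcal{T}(G)$ embeds on a surface of Euler characteristic $\chi$, the number of facial walks must equal $\chi-|V(G)|+|E(G)|$. Combined with the orientability criterion from Section~\ref{sec:graphembeddings} (nonorientable if and only if some cycle of $G$ contains an odd number of twisted edges, which translates into a constraint on cycles in $H_\mathcal{T}$ via duality), this already sharply restricts $H_\mathcal{T}$.

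The core step is to track how facial walks of the planar embedding $\beta(G)$ are merged or split by twisting. Using Algorithm~\ref{alg:FaceTrav}, any facial walk of $\beta(G)$ that avoids all edges in $\mathcal{T}$ survives as a facial walk of $\beta_\mathcal{T}(G)$, while the faces of $\beta(G)$ corresponding to vertices of $H_\mathcal{T}$ are merged in a pattern dictated by the incidence structure of $H_\mathcal{T}$ (compare Figure~\ref{fig:twistedArcs}). Remark~\ref{remark:3connectedcubicplanar} guarantees that two facial cycles of the planar embedding share at most a single edge, which keeps the combinatorics clean: an isolated edge in $H_\mathcal{T}$ fuses two faces into one walk (accounting for the $K_2$ case on the projective plane), while each further twisted edge either fuses two walks or splits one, with the overall effect controlled by $|V(H_\mathcal{T})|-|E(H_\mathcal{T})|$ together with a correction coming from vertices of $H_\mathcal{T}$ of degree at least three.

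With these tools in hand I would argue surface by surface. Fixing the target Euler characteristic and orientability type pins down the required contribution of $H_\mathcal{T}$ to the face count, and an exhaustive case analysis on the possible degree sequences of $H_\mathcal{T}$, using the fact that $H_\mathcal{T}$ lies inside the planar dual $G^{\ast}$, singles out the admissible isomorphism types. The converse direction, that every listed subgraph actually induces an embedding on the claimed surface, is a direct verification on a canonical realisation by running the face traversal procedure and checking the face count and a suitable odd-twisted cycle.

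The hard part will be the Klein bottle case. Non-orientability permits many more merging patterns, and one has to distinguish configurations that yield the Klein bottle from those that yield the torus, which has the same Euler characteristic, purely via the parity of twisted edges on cycles. The appearance of six sporadic graphs $A_1,\ldots,A_6$ alongside the infinite families $K_{2,2m-1}$ and $K_{1,1,2m}$ means that ruling out additional sporadic solutions requires a tight argument about how many vertices of $H_\mathcal{T}$ can have degree $\geq 3$ under the face-count and planarity constraints imposed by $G^{\ast}$.
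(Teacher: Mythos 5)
First, note that the paper does not prove this statement at all: Theorem \ref{theorem:enami} is quoted verbatim from Enami \cite{EnamiEmbeddings} as background material, so there is no internal proof to compare against. Your outline does follow the general strategy of Enami's original argument (encode re-embeddings by twisted subgraphs $H_\mathcal{T}$ of $G^{\ast}$, use Euler's formula to pin down the number of facial walks, and use the parity of twisted edges on cycles to separate the torus from the Klein bottle), so the skeleton points in the right direction.

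However, as written it is a plan rather than a proof, and the missing pieces are exactly the substantive ones. First, surjectivity of the correspondence: you must justify that \emph{every} embedding of $G$ on these surfaces is equivalent to one whose embedding scheme uses the fixed planar rotation system together with some signature; for cubic graphs this holds because each vertex admits only two cyclic orders of its three edges, interchanged by a local change, but you never state this, and without it the reduction to subgraphs of $G^{\ast}$ is incomplete. Second, injectivity: you do not argue that distinct subgraphs yield inequivalent embeddings, which is the other half of ``one-to-one correspondence.'' Third, the face count is not controlled by $|V(H_\mathcal{T})|-|E(H_\mathcal{T})|$ plus a local degree correction: the facial walks of $\beta_\mathcal{T}(G)$ containing twisted edges correspond to the facial walks of $\beta_\mathcal{T}(H_\mathcal{T})$ (cf.\ Lemma \ref{lemma:dualCycle}), which depend on how $H_\mathcal{T}$ is embedded inside the planar dual $G^{\ast}$ and not merely on its abstract isomorphism type, so the invariant you propose is too coarse to drive the classification. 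Finally, the ``exhaustive case analysis on degree sequences'' is precisely where all the work lies --- in particular ruling out sporadic configurations beyond $A_1,\dots,A_6$ in the Klein bottle case --- and it is only announced, not carried out. As it stands the proposal identifies plausible tools but establishes none of the three assertions.
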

\begin{figure}[h!]
    \centering
    \begin{subfigure}{0.25\textwidth}
        \centering
        \begin{tikzpicture}[scale=1.8]
            \tikzset{knoten/.style={circle,fill=black,inner sep=0.7mm}}
            \node [knoten] (a) at (0,0) {};
            \node [knoten] (b) at (0,1) {};
            \node [knoten] (c) at (0.5,0) {};
            \node [knoten] (d) at (0.5,1) {};
            
            \draw[-,thick] (a) to (b);
            \draw[-,thick] (c) to (d);
            
        \end{tikzpicture}
        \caption{$A_1$}
    \end{subfigure}
    \begin{subfigure}{0.25\textwidth}
        \centering
        \begin{tikzpicture}[scale=1.8]
            \tikzset{knoten/.style={circle,fill=black,inner sep=0.7mm}}
            \node [knoten] (a) at (0.2,0) {};
            \node [knoten] (b) at (0.2,1) {};
            \node [knoten] (c) at (0.5,0) {};
            \node [knoten] (d) at (1.5,0) {};
            \node [knoten] (e) at (1,0.5) {};
            \node [knoten] (f) at (1,1) {};
            
            \draw[-,thick] (a) to (b);
            \draw[-,thick] (c) to (d);
            \draw[-,thick] (c) to (e);
            \draw[-,thick] (c) to (f);
            \draw[-,thick] (d) to (e);
            \draw[-,thick] (d) to (f);
            \draw[-,thick] (e) to (f);
            
        \end{tikzpicture}
        \caption{$A_2$}
    \end{subfigure}
    \begin{subfigure}{0.25\textwidth}
        \centering
        \begin{tikzpicture}[scale=1.8]
            \tikzset{knoten/.style={circle,fill=black,inner sep=0.7mm}}
            \node [knoten] (a) at (0,0) {};
            \node [knoten] (b) at (1,0) {};
            \node [knoten] (c) at (0.5,0.5) {};
            \node [knoten] (d) at (0.5,1) {};
            \node [knoten] (e) at (0.5,1.5) {};
            
            \draw[-,thick] (a) to (b);
            \draw[-,thick] (a) to (c);
            \draw[-,thick] (a) to (d);
            \draw[-,thick] (b) to (c);
            \draw[-,thick] (b) to (d);
            \draw[-,thick] (c) to (d);
            \draw[-,thick] (d) to (e);
          
        \end{tikzpicture}
        \caption{$A_4$}
    \end{subfigure}
    \caption{The three graphs $A_1$ (a), $A_2$ (b) and $A_4$ (c)}
    \label{fig:SubgraphsKleinBottleGeneral}
\end{figure}
 
This characterization yields a concrete approach for computing an embedding of a given $3$-connected cubic planar graph $G$ on a surface $S$ with non-negative Euler characteristic:
First, the planar embedding of $G$ must be computed to obtain the dual graph $G^\ast$. Then, depending on the surface $S$, we have to find a corresponding subgraph $H$ of $G^\ast$.
We define the dual edges of $H$ to be the twisted edges $\mathcal{T}\subseteq E(G)$ of the re-embedding $\beta_\mathcal{T}(G)$, where $H=H_\mathcal{T}$ is the twisted subgraph of $G^\ast$. With this, the facial walks of the re-embedding $\beta_\mathcal{T}(G)$ on the chosen surface can be computed by the face traversal algorithm. 
To compute all inequivalent embeddings of $G$ on a surface with non-negative Euler characteristic, we must first identify all relevant subgraphs of $G^\ast$ and subsequently translate each into a corresponding re-embedding $\beta_\mathcal{T}(G)$.

\begin{example}\label{ex:torus}
    As an example, we consider the graph $G$ with its planar embedding drawn in Figure~\ref{subfig:G}, which we would like to embed on a torus. For this we compute the dual graph $G^{\ast}$, drawn in Figure~\ref{subfig:dual}. The dual graph $G^\ast$ has a subgraph $H$ isomorphic to a $K_{2,2}$, which defines an embedding on a torus as stated in Theorem~\ref{theorem:enami}. The edges of $H$ are coloured red in Figure~\ref{subfig:dual}.
    The rotation system $\rho$, defined by the drawing of $G$ on the plane, and the signature $\lambda$, defined by the dual edges of $H$, define an embedding scheme of $G$ on a torus. Since a torus is an orientable surface, there exists an equivalent embedding scheme such that all edges are untwisted (see~\cite[Chapter 3.3]{GraphsOnSurfaces}). Thus, by local changes we compute an embedding scheme of $G$ equivalent to 
    $(\rho,\lambda)$ without twisted edges. The resulting embedding of $G$ on the torus is shown in Figure~\ref{subfig:embTorus}, where the edges of $H$ are still coloured red.
\end{example}
\input{Figures/Example_Re-embedding}

\section{Characterisations of Strong Re-embeddings} \label{sec: strongreemb}
In this section, we consider strong embeddings of 3-connected cubic planar graphs on surfaces with non-negative Euler characteristic. As noted in the introduction, strong embeddings play a central role in the study of simplicial surfaces. For a given cubic graph $G$, multiple simplicial surfaces may exist whose face graphs are isomorphic to $G$. Note that computing all such surfaces is equivalent to enumerating all strong embeddings of $G$.

\subsection{Dual Facial Walks}
We begin by identifying the conditions under which a twisted subgraph yields a strong embedding of a 3-connected cubic planar graph, referred to as a \textbf{strong re-embedding}. 
This characterization (Theorem~\ref{theorem:strong}) serves as the foundation for the proof of Theorem~\ref{theorem:main}.
However, as the following lemma demonstrates, certain choices of twisted edges never define strong re-embeddings.

\begin{lemma}\label{TwistedSimplicial}
    Let $G$ be a $3$-connected cubic planar graph and $\mathcal{T}\subseteq E(G)$. If a face of the planar embedding of $G$ contains exactly two twisted edges in its boundary that are adjacent in $G$, then the resulting re-embedding $\beta_\mathcal{T}(G)$ is not strong.
\end{lemma}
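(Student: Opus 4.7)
Let me denote by $v_0$ the common vertex of $e_1$ and $e_2$ and by $g$ the third edge at $v_0$. By Remark~\ref{remark:3connectedcubicplanar} the boundary of $F$ is a simple cycle, which I label $v_0, u_1, u_2, \ldots, u_{k-1}, v_0$ so that $e_2 = \{v_0, u_1\}$, the intermediate edges $f_i = \{u_i, u_{i+1}\}$ for $1 \le i \le k-2$ are untwisted, and $e_1 = \{u_{k-1}, v_0\}$. After relabelling $e_1$ and $e_2$ if necessary, I may assume the rotation at $v_0$ satisfies $\rho_{v_0}(g) = e_1$, $\rho_{v_0}(e_1) = e_2$, and $\rho_{v_0}(e_2) = g$. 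For each interior vertex $u_i$ of $F$ the planar $F$-walk determines $\rho_{u_i}$ on the two $F$-edges, so in particular $\rho_{u_{k-1}}(f_{k-2}) = e_1$, $\rho_{u_i}(f_{i-1}) = f_i$ for $2 \le i \le k-2$, and $\rho_{u_1}(e_2) = f_1$.

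The plan is to run the face traversal procedure (Algorithm~\ref{alg:FaceTrav}) starting at $v_0$ with $f = e_1$ and $\kappa = 1$, and to show that the resulting facial walk visits $v_0$ more than once. After traversing the twisted $e_1$, we are at $u_{k-1}$ with $\kappa = -1$. Inverting the rotation identities above yields $\rho_{u_{k-1}}^{-1}(e_1) = f_{k-2}$, $\rho_{u_i}^{-1}(f_i) = f_{i-1}$ for $2 \le i \le k-2$, and $\rho_{u_1}^{-1}(f_1) = e_2$; since every $f_i$ is untwisted, $\kappa$ remains $-1$ as the walk traverses $f_{k-2}, f_{k-3}, \ldots, f_1$ and visits $u_{k-2}, \ldots, u_1$. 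At $u_1$ the next edge is the twisted $e_2$, whose traversal restores $\kappa = +1$, so after exactly $k$ edges the walk is back at $v_0$.

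At this return the next edge is $\rho_{v_0}(e_2) = g \neq e_1 = f$, so the stopping criterion of Algorithm~\ref{alg:FaceTrav} is not met and the walk continues via $g$ into the rest of $G$. Hence the completed facial walk contains the vertex $v_0$ at both its initial position and its $k$-th position, and therefore cannot be a simple cycle. This produces a facial walk of $\beta_\mathcal{T}(G)$ that is not a cycle, so the re-embedding is not strong.

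The main thing I would need to verify carefully is the bookkeeping with $\rho$ versus $\rho^{-1}$ at the interior vertices of $F$, which hinges on the convention fixing the rotation at $v_0$; once that is pinned down, the backwards traversal along the inside of $F$ is forced by all intermediate $f_i$ being untwisted.
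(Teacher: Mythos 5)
Your proof is correct and follows essentially the same route as the paper's: start the face traversal at the common vertex with the twisted edge $e_1$ (normalised so that $\rho_{v_0}(e_1)=e_2$), traverse the face backwards along the untwisted edges, return to $v_0$ via the twisted $e_2$ with $\kappa$ restored to $1$, and observe that the next edge is $\rho_{v_0}(e_2)=g\neq e_1$, so the walk revisits $v_0$ and cannot be a cycle. Your version just makes the $\rho$ versus $\rho^{-1}$ bookkeeping at the interior vertices explicit, which the paper leaves implicit.
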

\begin{proof}
    Suppose $C$ is a face of $G$ embedded on the sphere whose boundary contains exactly two adjacent twisted edges $e_1,e_2\in \mathcal{T}$ with $e_1=\{v,w\}$ and $e_2=\{v,u\}$, as illustrated in Figure~\ref{RestrictionTwisted2}. Let $\rho$ be the rotation system of $\beta_\mathcal{T}(G)$ with $\rho_v(e_1)=e_2$. We start the face traversal algorithm by traversing $e_1$ from $v$ to $w$.
    Since $e_1$ is twisted, we consider the anticlockwise successor of $e_1$ at $w$, i.e.\ $\rho^{-1}_w(e_1)$. We continue the face traversal algorithm by traversing $C$ through the anticlockwise successors until we traverse $e_2$. Then we are back at $v$, but since $e_2$ is twisted, we do not return to $e_1$ and terminate, but continue with the other face containing $e_2$. Thus, the face traversal algorithm results in a closed walk which is not a cycle, as shown in Figure~\ref{RestrictionTwisted2}.
\end{proof}
\begin{figure}[h!]
        \centering
        \begin{tikzpicture}[scale=2.2]
            \tikzset{knoten/.style={circle,fill=black,inner sep=0.7mm}}
            \node [knoten,label=above:$v$] (a) at (0,0) {};
            \node [knoten,label=above:$w$] (b) at (1,0) {};
            \node [knoten,label=below:$u$] (c) at (-0.5,-0.5) {};
            \node [knoten] (d) at (1.5,-0.5) {};
            
            \draw[-,red, very thick] (a) to (b);
            \draw[-,red, very thick] (a) to (c);
            \draw[-] (b) to (d);
            \draw[-] (c) to (-0.2,-0.8);
            \draw[-] (d) to (1.2,-0.8);
            \draw[-] (a) to (-0.2,0.2);
            \draw[-] (b) to (1.2,0.2);
            \draw[-] (c) to (-0.8,-0.5);
            \draw[-] (d) to (1.8,-0.5);
            \draw[loosely dotted] (-0.2,-0.8) to (1.2,-0.8);
            \draw[blue,very thick,rounded corners] (-0.25,0.15) -- (-0.1,0) -- (-0.3,-0.2) -- (-0.2,-0.3) -- (-0.4,-0.5)  -- (-0.2,-0.7) -- (1.2,-0.7) -- (1.4,-0.5) -- (1,-0.08) -- (0.5,-0.08) -- (0.5,0.08) -- (0.155,0.08);
            \node (C) at (0.5,-0.5) {$C$};
            \node (D) at (0.5,0.2) {$e_1$};
            \node (E) at (-0.5,-0.2) {$e_2$};
        \end{tikzpicture}
         \caption{A face that has exactly two adjacent twisted edges coloured red contained in the boundary, together with its facial walk coloured blue.}
        \label{RestrictionTwisted2}
    \end{figure}
 
Analogously, some of the twisted subgraphs mentioned in Theorem~\ref{theorem:enami} give rise to facial walks that are not cycles. We characterise the subgraphs that always give rise to strong re-embeddings and those that do not. To do this, we need a better understanding of the cases in which the resulting embeddings are not strong.

Let $G$ be a $3$-connected cubic planar graph embedded on the sphere with the rotation system $\rho$ and let $\lambda$ be a signature of $G$ that designates a subset $\mathcal{T}\subseteq E(G)$ as twisted. 
The embedding scheme $(\rho,\lambda)$ of $G$ induces an embedding scheme $(\rho^{\ast},\lambda^{\ast})$ of $G^{\ast}$, where $\rho^{\ast}$ denotes the rotation system defined by the planar embedding of $G^{\ast}$ induced by $\rho$, and $\lambda^{\ast}$ is defined such that all dual edges of $\mathcal{T}$, denoted by $\mathcal{T}^\ast$, are twisted. 
Moreover, we consider the restriction of $(\rho^{\ast},\lambda^{\ast})$ to $H_\mathcal{T}$, denoted by $(\rho^{\ast},\lambda^{\ast})\big|_{H_\mathcal{T}}$, obtained by simply omitting everything not contained in $H_\mathcal{T}$. This embedding scheme defines an embedding of $H_\mathcal{T}$, which, in analogy with the re‑embeddings of $G$, is denoted by $\beta _{\mathcal{T^{\mathnormal{\ast }}}}(H_{\mathcal{T}})$.
Note that in this embedding scheme of $H_\mathcal{T}$ the rotation system is uniquely determined by $\rho$ and that all edges are twisted. Consequently, the orientation in the facial walks alternates at every step.
The facial walks of $\beta_\mathcal{T}(G)$ and those of $\beta_{\mathcal{T}^\ast}(H_\mathcal{T})$ are closely related, as we will show in Lemma~\ref{lemma:dualCycle}.

The following definition identifies those edges of the dual graph that are not part of the twisted subgraph but whose position in the local rotation still influences the resulting facial walks. Based on this notion, we formulate a characterization that determines whether a twisted subgraph defines a strong embedding, independently of the underlying cubic planar graph.
Here, we denote the rotation system of $(\rho^{\ast},\lambda^{\ast})\big|_{H_\mathcal{T}}$ by $\rho'$ for simplicity.

\begin{definition}\label{def:dualcycles}
    Let $G$ be a $3$-connected cubic planar graph, $\mathcal{T}\subseteq E(G)$ and $\rho'$ the rotation system of the re-embedding $\beta_{\mathcal{T}^\ast}(H_\mathcal{T})$ determined by the unique planar embedding of $G$.
    Furthermore, let $C=(w_1,\dots,w_\ell)$ be a facial walk of $\beta_{\mathcal{T}^\ast}(H_\mathcal{T})$ with $e_i=\{w_{i-1},w_i\}\in E(H_\mathcal{T})$ for $i\in\{2,\dots,\ell\}$ and $e_1=\{w_1,w_\ell\}\in E(H_\mathcal{T})$.
    By definition the edges $e_1,\dots,e_\ell$ are twisted in the re-embedding $\beta_{\mathcal{T}^\ast}(H_\mathcal{T})$.
    First, consider the edge $e_i$ for $i$ odd. Thus, along the facial walk $C$ the edge $e_{i+1}$ is the clockwise successor under $\rho'$ of $e_i$ at $w_i$ with $e_{\ell+1}:=e_1$. Let $f_i^1,\dots, f_i^k\in E(G^\ast)\setminus E(H_\mathcal{T})$ denote the edges of $G^\ast$ that are incident to $w_i$ and lie between $e_i$ and $e_{i+1}$ in the clockwise ordering defined by $\rho'$ where $k$ depends on $i$. This means that we have $\rho'_{w_i}(e_i)=f_i^1$, $\rho'
    _{w_i}(f_i^j)=f_i^{j+1}$ for $j\in\{1,\dots,k-1\}$ and $\rho'_{w_i}(f^k_i)=e_{i+1}$, as shown in Figure~\ref{fig:visitedEdgesa}. We refer to the edges $f_i^1,\dots, f_i^k\in E(G^\ast)\setminus E(H_\mathcal{T})$ as the \emph{\textbf{visited edges}} of $C$ at $w_i$ and denote the tuple $(f_i^1,\dots,f_i^k)$ of visited edges at $w_i$ by $M_i$.
    The case for $e_i$ with even $i$ is analogous, replacing clockwise by anticlockwise and $\rho'$ by $(\rho')^{-1}$ with $(\rho')^{-1}=\{(\rho'_v)^{-1}\mid v\in V(H_\mathcal{T})\}$.
    With this we define the \emph{\textbf{dual facial walk}} of $H_\mathcal{T}$ in $G^{\ast}$, corresponding to $C$, as $(w_1^{M_1},\dots,w_\ell^{M_\ell})$, where the notation $w_i^{M_i}$ records the vertex $w_i$ of $C$ together with its visited edges $M_i$ for $i\in\{1,\dots,\ell\}$.
\end{definition}

For a better understanding of the definition, an illustration is given in Figure~\ref{fig:visitedEdges} showing the circular ordering around a vertex $w_i$ in $G^\ast$ with its visited edges of $C$ coloured teal. In Figure~\ref{fig:visitedEdgesa}, the visited edges, i.e.\ the edges of $G^\ast$ that are not included in $H_\mathcal{T}$ but still influence the facial walks of $\beta_\mathcal{T}(G)$, are the edges lying between $e_i$ and $e_{i+1}$ in clockwise order. In Figure~\ref{fig:visitedEdgesb}, they are the edges lying between $e_i$ and $e_{i+1}$ in anticlockwise order.

\begin{figure}[h!]
    \centering
    \begin{subfigure}{0.45\textwidth}
        \centering
        \begin{tikzpicture}[scale=1.8]
    \tikzset{knoten/.style={circle,fill=black,inner sep=0.7mm}}
    \node [knoten,label=$w_i$] (a) at (0,0) {};
    
    \node [knoten] (b) at (-1,0) {};
    \node [knoten] (c) at ({-sqrt(2)/2},{sqrt(2)/2}) {};
    \node [knoten] (d) at ({sqrt(2)/2},{sqrt(2)/2}) {};
    \node [knoten] (e) at (1,0) {};
    \node [knoten] (f) at ({sqrt(2)/2},{-sqrt(2)/2}) {};
    \node [knoten] (g) at ({-sqrt(2)/2},{-sqrt(2)/2}) {};
    
    \draw[-,very thick,red] (a) to (b);
    \draw[-,very thick,teal] (a) to (c);
    \draw[-,very thick,teal] (a) to (d);
    \draw[-,very thick,red] (a) to (e);
    \draw[-, thick] (a) to (f);
    \draw[-, thick] (a) to (g);

    \node (D) at (-0.5,-0.15) {$e_i$};
    \node (D) at (0.6,-0.15) {$e_{i+1}$};

    \node (D) at (-0.58,0.3) {$f_i^1$};
    \node (D) at (0.52,0.3) {$f_i^k$};

    \draw[ dotted,line width=1pt] (-0.2,{sqrt(2)/2}) to (0.2,{sqrt(2)/2});
    \draw[ dotted,line width=1pt] (-0.2,{-sqrt(2)/2}) to (0.2,{-sqrt(2)/2});

    \draw[->, bend left=40] (-0.25,0.5) to (0.25,0.5);

\end{tikzpicture}
        \caption{}
        \label{fig:visitedEdgesa}
    \end{subfigure}
    \begin{subfigure}{0.45\textwidth}
        \centering
        \begin{tikzpicture}[scale=1.8]
    \tikzset{knoten/.style={circle,fill=black,inner sep=0.7mm}}
    \node [knoten,label=$w_i$] (a) at (0,0) {};
    
    \node [knoten] (b) at (-1,0) {};
    \node [knoten] (c) at ({-sqrt(2)/2},{sqrt(2)/2}) {};
    \node [knoten] (d) at ({sqrt(2)/2},{sqrt(2)/2}) {};
    \node [knoten] (e) at (1,0) {};
    \node [knoten] (f) at ({sqrt(2)/2},{-sqrt(2)/2}) {};
    \node [knoten] (g) at ({-sqrt(2)/2},{-sqrt(2)/2}) {};
    
    \draw[-,very thick,red] (a) to (b);
    \draw[-, thick] (a) to (c);
    \draw[-, thick] (a) to (d);
    \draw[-,very thick,red] (a) to (e);
    \draw[-,very thick,teal] (a) to (f);
    \draw[-,very thick,teal] (a) to (g);

    \node (D) at (-0.5,0.1) {$e_i$};
    \node (D) at (0.6,0.1) {$e_{i+1}$};

    \node (D) at (-0.52,-0.3) {$f_i^1$};
    \node (D) at (0.55,-0.3) {$f_i^k$};

    \draw[ dotted,line width=1pt] (-0.2,{sqrt(2)/2}) to (0.2,{sqrt(2)/2});
    \draw[ dotted,line width=1pt] (-0.2,{-sqrt(2)/2}) to (0.2,{-sqrt(2)/2});

    \draw[->, bend right=40] (-0.25,-0.5) to (0.25,-0.5);

\end{tikzpicture}
        \caption{}
        \label{fig:visitedEdgesb}
    \end{subfigure}
    \caption{The local rotation at a vertex $w_i\in H_\mathcal{T}$ of the planar embedding of $G^\ast$ with $e_i,e_{i+1}\in E(H_\mathcal{T})$ and its visited edges $f_i^1,\dots,f_i^k$ of $C$ for $i$ odd (a) and $i$ even (b). The visited edges are shown in teal and the twisted edges in red.}
    \label{fig:visitedEdges}
\end{figure}

To proceed, we now establish a precise correspondence between the facial walks of $\beta_\mathcal{T}(G)$ and the dual facial walks of $H_\mathcal{T}$ in $G^\ast$.
For this, note first that a walk of a graph can be written as a sequence of edges. 
For the following definition we additionally use the notation $(E_1,e_1,E_2,e_2,\dots,E_\ell,e_\ell)$ of a walk in $G$, where $e_i\in E(G)$ and $E_i$ is a sequence of edges of $G$ forming a walk for $i\in\{1,\dots,\ell\}$.

\begin{definition}\label{def:corr}
    Let $G$ be a $3$-connected cubic planar graph with the facial cycles $F_1,\dots,F_m$ of its planar embedding and $\mathcal{T}\subseteq E(G)$. Moreover, let $\mathcal{F}(\beta_\mathcal{T}(G))$ denote the set of facial walks of $\beta_\mathcal{T}(G)$ containing twisted edges and $\mathcal{D}(H_\mathcal{T})$ the set of dual facial walks of $H_\mathcal{T}$ in $G^\ast$.
    Let $C$ be a facial walk of $\beta_\mathcal{T}(G)$ which can be written as $(E_1,e_1,E_2,e_2,\dots,E_\ell,e_\ell)$ such that $(e_i,E_{i+1},e_{i+1})$ is a subwalk of a facial cycle $F_{k_i}$ for all $i\in\{1,\dots,\ell\}$ with $e_{\ell+1}=e_1, E_{\ell+1}=E_1$ and $e_i\in\mathcal{T}$ for all $i\in\{1,\dots,\ell\}$. Thus, $\ell$ is even and the direction in which we traverse the faces of $F_{k_1},\dots,F_{k_\ell}$ alternates: for odd $i$ the face is traversed clockwise, whereas for even $i$ it is traversed anticlockwise.
    All dual edges of the edges in the subwalk $(e_i,E_{i+1},e_{i+1})$ are incident to the vertex $w_i$ in $H_\mathcal{T}$ that corresponds to the face $F_{k_i}$. However, only the dual edges of $e_i$ for $i\in\{1,\dots,\ell\}$ are actually contained in $H_\mathcal{T}$.
    We denote the sequence of the dual edges of the sequence $E_i$ by $E_i^\ast$ for $i\in\{1,\dots,\ell\}$.
    With this, we define $$\varphi:\mathcal{F}(\beta_\mathcal{T}(G))\rightarrow \mathcal{D}(H_\mathcal{T}),\,(E_1,e_1,E_2,e_2,\dots,E_\ell,e_\ell)\mapsto (w_1^{E_1^\ast},w_2^{E_2^\ast},\dots,w_\ell^{E_\ell^\ast}).$$
\end{definition}

The following statement shows that the map $\varphi$ describes a one-to-one correspondence between the facial walks of $\beta_\mathcal{T}(G)$ that contain twisted edges and the dual facial walks of $H_\mathcal{T}$ in $G^\ast$. This means that the dual facial walks can be used to uniquely determine the facial walks of $\beta_\mathcal{T}(G)$.

\begin{lemma}\label{lemma:dualCycle}
    Let $G$ be a $3$-connected cubic planar graph and $\mathcal{T}\subseteq E(G)$. Moreover, let $\mathcal{F}(\beta_\mathcal{T}(G))$ denote the set of facial walks of $\beta_\mathcal{T}(G)$ that contain twisted edges and $\mathcal{D}(H_\mathcal{T})$ the set of dual facial walks of $H_\mathcal{T}$ in $G^\ast$.
    The map $\varphi:\mathcal{F}(\beta_\mathcal{T}(G))\rightarrow \mathcal{D}(H_\mathcal{T})$ defined in Definition~\ref{def:corr} is well-defined and bijective.
\end{lemma}
\begin{proof}
    We use the notation introduced in Definition~\ref{def:corr}. Thus, $C=(E_1,e_1,E_2,e_2,\dots,E_\ell,e_\ell)$ is a facial walk of $\beta_\mathcal{T}(G)$ such that $(e_i,E_{i+1},e_{i+1})$ is a subwalk of a facial cycle $F_{k_i}$ of the planar embedding of $G$ for all $i\in\{1,\dots,\ell\}$ with $e_{\ell+1}=e_1$ and $E_{\ell+1}=E_1$. Moreover, $w_i$ is the vertex of $G^\ast$ that corresponds to $F_{k_i}$.
    First we show that $C^\ast:=\varphi(C)=\varphi((E_1,e_1,E_2,e_2,\dots,E_\ell,e_\ell))=(w_1^{E_1^\ast},w_2^{E_2^\ast},\dots,w_\ell^{E_\ell^\ast})$ is indeed a dual facial walk of $H_\mathcal{T}$ in $G^\ast$ with $E_1^\ast,\dots,E_\ell^\ast$ denoting the dual edge sequences of $E_1,\dots,E_\ell$. Since $C$ is a closed walk, it follows directly that $(w_1,w_2,\dots,w_\ell)$ is a closed walk in $H_\mathcal{T}$. 
    We recall that the facial cycles $F_{k_i}$ for $i\in\{1,\dots,\ell\}$ are traversed clockwise for $i$ odd and anticlockwise for $i$ even. Since these cycles correspond to the vertices $w_i$ of $H_\mathcal{T}$ for $i\in\{1,\dots,\ell\}$, traversing $e_i^\ast=\{w_i,w_{i+1}\}$ with $w_{\ell+1}=w_1$ reverse the ordering in which we consider the successor at $w_{i+1}$: for odd $i$ from clockwise to anticlockwise and for $i$ even the other way around. Thus, $(w_1,w_2,\dots,w_\ell)$ is a facial walk of the re-embedding $\beta_{\mathcal{T}^\ast}(H_\mathcal{T})$.
    Moreover, by definition, the dual sequence of $E_i$ for $i\in\{1,\dots,\ell\}$ is precisely the sequence of visited edges of $(w_1,w_2,\dots,w_\ell)$ at $w_i$. Thus, $C^\ast$ is a dual facial walk of $H_\mathcal{T}$ in $G^\ast$.
    Note that a facial walk $(E_1,e_1,E_2,e_2,\dots,E_\ell,e_\ell)$ of $\beta_\mathcal{T}(G)$ may be cyclically shifted or reversed but it still represents the same facial walk of $\beta_\mathcal{T}(G)$. The same holds for the dual facial walks of $H_\mathcal{T}$ where additionally the visited edges are reversed if the corresponding facial walk of $\beta_{\mathcal{T}^\ast}(H_\mathcal{T})$ is reversed. Thus, $\varphi$ is well-defined.

    It remains to show that $\varphi$ is bijective. We do this by defining $$\psi:\mathcal{D}(H_\mathcal{T})\rightarrow \mathcal{F}(\beta_\mathcal{T}(G)),\, (v_1^{M_1},v_2^{M_2},\dots,v_\ell^{M_\ell})\mapsto (M_1^\ast,f_1^\ast,M_2^\ast,f_2^\ast,\dots,M_\ell^\ast,f_\ell^\ast)$$ where $f_i^\ast$ denotes the dual edge of $f_i:=\{v_i,v_{i+1}\}$ where $v_{\ell+1}=v_1$ and $M_i^\ast$ is the dual edge sequence of $M_i$ for $i\in\{1,\dots,\ell\}$. We claim that $\psi$ is the inverse map of $\varphi$.
    The closed walk $(M_1^\ast,f_1^\ast,M_2^\ast,f_2^\ast,\dots,M_\ell^\ast,f_\ell^\ast)$ is indeed a facial walk of $\beta_\mathcal{T}(G)$ since each edge $f_i$ for $i\in\{1,\dots,\ell\}$ is a twisted edge of $\beta_\mathcal{T}(G)$ and the ordering by traversing $f_i$ always changes by the definition of the dual facial walks.
    Consequently, by the same reasoning as for $\varphi$, the map $\psi$ is well‑defined.
    It remains to show that $\varphi$ and $\psi$ are inverse to each other. For this let $C=(E_1,e_1,E_2,e_2,\dots,E_\ell,e_\ell)$ be a facial walk of $\beta_\mathcal{T}(G)$ as in Definition~\ref{def:corr}. We compute
    $$\psi(\varphi(C))=\psi(\varphi((E_1,e_1,E_2,e_2,\dots,E_\ell,e_\ell)))=\psi((w_1^{E_1^\ast},w_2^{E_2^\ast},\dots,w_\ell^{E_\ell^\ast}))=(E_1,e_1,E_2,e_2,\dots,E_\ell,e_\ell)=C$$ since $e_i$ is the dual edge of $\{w_i,w_{i+1}\}$ for $i\in\{1,\dots,\ell\}$ and $w_{\ell+1}=w_1$.
    Similarly,
    $$\varphi(\psi((v_1^{M_1},v_2^{M_2},\dots,v_\ell^{M_\ell})))=\varphi((M_1^\ast,f_1^\ast,M_2^\ast,f_2^\ast,\dots,M_\ell^\ast,f_\ell^\ast))=(v_1^{M_1},v_2^{M_2},\dots,v_\ell^{M_\ell})$$
    by the definition of $\psi$ and $\varphi$.
    Thus, $\psi$ is inverse to $\varphi$ and so $\varphi$ is bijective.
\end{proof}

This statement shows that the maps $\varphi$ and $\psi$ establish a one-to-one correspondence between the facial walks of $\beta_\mathcal{T}(G)$  that contain twisted edges and the dual facial walks of $\mathcal{T}^\ast$ in $G^\ast$. We will later use this correspondence to identify conditions on the dual facial walks of $H_\mathcal{T}$ in $G^\ast$ that ensure that the facial walks of $\beta_\mathcal{T}(G)$ are cycles. Before doing so, we present an example that illustrates the concept of the dual facial walks of $H_\mathcal{T}$ in $G^\ast$ and their correspondence to the facial walks of the re-embedding $\beta_\mathcal{T}(G)$.

\begin{example}\label{ex:dualcycle}
    As an example, consider the $3$-connected cubic planar graph $G$ with its planar embedding shown in Figure~\ref{subfig:cubic6} and let $H_\mathcal{T}\cong K_{2,2}$ be defined by $\mathcal{T}=\{\,\{1,2\},\{1,3\},\{4,6\},\{5,6\}\,\}\subseteq E(G)$, coloured red. Let the faces of the planar embedding of $G$ be labelled as follows:
    $$a=(2,3,4,5),\,b=(1,2,5,6),\,c=(1,2,3),\,d=(1,3,4,6)\text{ and } e=(4,5,6).$$
    In Figure~\ref{subfig:dualcubic6}, the planar embedding of $G^{\ast}$, induced by the planar embedding of $G$, is drawn with the labelling inherited from the face labels of $G$. The red edges indicate the subgraph $H_\mathcal{T}$. Consider the facial cycle $C=(c,d,e,b)$ of $\beta_{\mathcal{T}^\ast}(H_\mathcal{T})$ starting in clockwise orientation. The twisted edges of $C$ are coloured red, the visited edges of $C$ at all vertices are coloured teal and the facial cycle $C$ is coloured blue in Figure~\ref{subfig:dualcubic6}. Thus, the dual facial walk of $H_\mathcal{T}$ in $G^\ast$ corresponding to $C$ is $(c^{(\{a,c\})},d^{(\{b,d\})},e^{(\{a,e\})},b^{(\{b,d\})})$. Translating all the twisted and visited edges to their dual edges we obtain $(2,3,1,6,4,5,6,1)$ as a facial walk of $\beta_\mathcal{T}(G)$, see Figure~\ref{subfig:cubic6_facial}. Thus, the re-embedding $\beta_\mathcal{T}(G)$ is not strong.
    \begin{figure}[H]
    \centering
    \begin{subfigure}{0.3\textwidth}
        \centering
        \begin{tikzpicture}[scale=2.2]
            \tikzset{knoten/.style={circle,fill=black,inner sep=0.7mm}}
            \node [knoten,label=left:1] (a) at (-1,0) {};
            \node [knoten,label=above:2] (b) at (-0.5,0.5) {};
            \node [knoten,label=below:3] (c) at (-0.5,-0.5) {};
            \node [knoten,label=below:4] (d) at (0.5,-0.5) {};
            \node [knoten,label=above:5] (e) at (0.5,0.5) {};
            \node [knoten,label=right:6] (f) at (1,0) {};
            \draw[-,red,very thick] (a) to (b);
            \draw[-,red,very thick] (a) to (c);
            \draw[-,thick] (b) to (c);
            \draw[-,thick] (b) to (e);
            \draw[-,thick] (c) to (d);
            \draw[-,thick] (d) to (e);
            \draw[-,red,very thick] (d) to (f);
            \draw[-,red,very thick] (e) to (f);
            \draw[-,thick] (-1,-0.05) arc (-180:0:1cm);;

            \node [cyan] (A) at (0,0) {$a$};
            \node [cyan] (B) at (0,0.8) {$b$};
            \node [cyan] (C) at (-0.7,0) {$c$};
            \node [cyan] (D) at (0,-0.8) {$d$};
            \node [cyan] (E) at (0.7,0) {$e$};
            
        \end{tikzpicture}
        \caption{}
        \label{subfig:cubic6}
    \end{subfigure}
    \begin{subfigure}{0.3\textwidth}
        \centering
        \begin{tikzpicture}[scale=3]
            \tikzset{knoten/.style={circle,fill=black,inner sep=0.7mm}}
            \node [knoten,label=above right:$a$] (a) at (0,0) {};
            \node [knoten,label=$b$] (b) at (0,0.5) {};
            \node [knoten,label=$c$] (c) at (-0.45,0) {};
            \node [knoten,label=below:$d$] (d) at (0,-0.5) {};
            \node [knoten,label=$e$] (e) at (0.45,0) {};
            \draw[-,thick] (a) to (b);
            \draw[-,teal,ultra thick] (a) to (c);
            \draw[-,thick] (a) to (d);
            \draw[-,teal,ultra thick] (a) to (e);
            \draw[-,red,very thick] (b) to (c);
            \draw[-,red,very thick] (b) to (e);
            \draw[-,red,very thick] (c) to (d);
            \draw[-,red,very thick] (d) to (e);
            \draw[-,teal,ultra thick] (-0.05,0.5) arc (90:270:0.5cm);;

            \draw[blue,very thick,rounded corners] (0,0.58) -- (-0.2,0.35) -- (-0.2,0.2) -- (-0.4,0) -- (-0.2,-0.2) -- (-0.2,-0.35) -- (0,-0.58) -- (0.2,-0.35) -- (0.2,-0.2) -- (0.4,0) -- (0.2,0.2) -- (0.2,0.35) -- cycle;
        \end{tikzpicture}
        \caption{}
        \label{subfig:dualcubic6}
    \end{subfigure}
    \begin{subfigure}{0.3\textwidth}
        \centering
        \begin{tikzpicture}[scale=2.2]
            \tikzset{knoten/.style={circle,fill=black,inner sep=0.7mm}}
            \node [knoten,label=left:1] (a) at (-1,0) {};
            \node [knoten,label=above:2] (b) at (-0.5,0.5) {};
            \node [knoten,label=below:3] (c) at (-0.5,-0.5) {};
            \node [knoten,label=below:4] (d) at (0.5,-0.5) {};
            \node [knoten,label=above:5] (e) at (0.5,0.5) {};
            \node [knoten,label=right:6] (f) at (1,0) {};
            \draw[-,red,very thick] (a) to (b);
            \draw[-,red,very thick] (a) to (c);
            \draw[-,thick] (b) to (c);
            \draw[-,thick] (b) to (e);
            \draw[-,thick] (c) to (d);
            \draw[-,thick] (d) to (e);
            \draw[-,red,very thick] (d) to (f);
            \draw[-,red,very thick] (e) to (f);
            \draw[-,thick] (-1,-0.05) arc (-180:0:1cm);;

            \draw[blue,very thick,rounded corners] (0.95,-0.1) arc (-5:-160:0.95cm) -- (-0.95,-0.1) -- (-0.85,-0.25) -- (-0.75,-0.15) -- (-0.55,-0.4) -- (-0.55,0.35) -- (-0.75,0.15) -- (-0.8,0.3) -- (-1.08,0) arc (-180:-5:1.1cm) -- (1.08,0) -- (0.8,0.3) -- (0.75,0.15) -- (0.55,0.35) -- (0.55,-0.4) -- (0.75,-0.15) -- (0.85,-0.25) -- cycle;
            
        \end{tikzpicture}
        \caption{}
        \label{subfig:cubic6_facial}
    \end{subfigure}
    \caption{A planar embedded graph $G$ (a), its dual graph $G^{\ast}$ (b) with the twisted edges of $\beta_{\mathcal{T}^\ast}(H_\mathcal{T})$ coloured red and
    a facial walk of $\beta_{\mathcal{T}^\ast}(H_\mathcal{T})$ coloured blue with the visited edges coloured teal and $G$ with the resulting facial walk of $\beta_\mathcal{T}(G)$ coloured in blue (c).}
\end{figure}
\end{example}
 
The goal is to use dual facial walks to characterise whether a given embedding scheme defines a strong embedding or not. Before addressing this, we first establish a result concerning facial walks of $\beta_{\mathcal{T}^\ast}(H_\mathcal{T})$ that are not edge-simple.
A closed walk is called \textbf{edge-simple} if all of its edges are distinct. In particular, any edge-simple closed walk in a cubic graph is necessarily a cycle.

\begin{proposition}\label{lemma:edgesimple}
    Let $G$ be a $3$-connected cubic planar graph and $\mathcal{T}\subseteq E(G)$. If a facial walk of $\beta_{\mathcal{T}^\ast}(H_\mathcal{T})$ is not edge-simple, then the re-embedding $\beta_\mathcal{T}(G)$ is not strong.
\end{proposition}
\begin{proof}
     Let $\Tilde{C}$ be a facial walk of $\beta_{\mathcal{T}^\ast}(H_\mathcal{T})$ that traverses the edge $e\in H_\mathcal{T}$ more than once. Then the dual facial walk of $H_\mathcal{T}$ in $G^\ast$ that corresponds to $\Tilde{C}$ contains the edge $e$ also more than once, but not among the visited edges. By the bijection $\varphi$ between the facial walks of $\beta_\mathcal{T}(G)$ and the dual facial walks of $H_\mathcal{T}$ in $G^\ast$, as established in Lemma~\ref{lemma:dualCycle}, the dual edge of $e$ occurs twice in the corresponding facial walk $C$ of $\beta_\mathcal{T}(G)$. Thus, $C$ is not a cycle, and so the re-embedding $\beta_\mathcal{T} (G)$ is not strong.
\end{proof}

\begin{corollary}\label{cor:bridge}
     Let $G$ be a $3$-connected planar graph and $\mathcal{T}\subseteq E(G)$. If $H_\mathcal{T}$ has a bridge, then the re-embedding $\beta_\mathcal{T} (G)$ is not strong.
\end{corollary}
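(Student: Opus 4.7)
The plan is to reduce the corollary to Lemma \ref{lemma:edgesimple} by invoking the standard topological fact that a bridge in any embedded graph is traversed twice by the same facial walk. Once that is established for the embedding scheme on $H_\mathcal{T}$ inherited from $G^{\ast}$ and $\mathcal{T}$, the corollary is immediate: such a facial walk of $\beta_\mathcal{T}(H_\mathcal{T})$ fails to be edge simple, and the previous lemma says $\beta_\mathcal{T}(G)$ cannot then be strong.

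First I would verify the bridge-traversal fact directly from the face traversal procedure in Algorithm \ref{alg:FaceTrav}, working in the general (not necessarily orientable) setting. Let $e=\{v,w\}$ be a bridge of $H_\mathcal{T}$, so that $H_\mathcal{T}\setminus e$ splits into two components $H_v$ and $H_w$ containing $v$ and $w$ respectively. Start the face traversal procedure at $v$ with the edge $e$ and $\kappa=1$. After the first step we are at $w$; because $e$ is a bridge, every edge we subsequently reach at a vertex in $V(H_w)$ is again an edge of $H_w$ (the only way back into $H_v$ is through $e$). Hence the procedure is forced to wander around the rotations of vertices in $H_w$ and can only ever terminate its loop by traversing $e$ in the opposite direction from $w$ back to $v$; at that point we re-enter $H_v$, repeat the same argument there, and eventually close the walk. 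In particular, $e$ has been traversed twice by the same facial walk, so this walk is not edge simple.

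Second, with this fact in hand I apply it to the embedding scheme of $H_\mathcal{T}$ determined by the planar rotation inherited from $G^{\ast}$ together with the signature making every edge of $H_\mathcal{T}$ twisted. The facial walk of $\beta_\mathcal{T}(H_\mathcal{T})$ containing the bridge $e$ is not edge simple, so Lemma \ref{lemma:edgesimple} concludes that $\beta_\mathcal{T}(G)$ is not strong. The only genuinely delicate point in this plan is the bridge-traversal statement: it is well known, but one has to be careful in the non-orientable setting to track the variable $\kappa$, since a twisted bridge reverses the orientation at each traversal. Fortunately the reasoning above is local to the bridge and to the component $H_w$, so the sign changes of $\kappa$ caused by other twisted edges in $H_w$ are irrelevant for the conclusion that the walk must return through $e$.
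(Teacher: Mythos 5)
Your proof is correct and follows exactly the route the paper intends: the corollary is stated without proof as an immediate consequence of Lemma~\ref{lemma:edgesimple}, relying on precisely the standard fact you verify, namely that a bridge is traversed twice by a single facial walk. Your tracing of the face traversal procedure through the two components of $H_\mathcal{T}$ minus the bridge (and your observation that the twisting of edges only affects $\kappa$, not the connectivity argument) correctly supplies the detail the paper leaves implicit.
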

\begin{proof}
    Because $H_\mathcal{T}$ has a bridge, the facial walks of $\beta_{\mathcal{T}^\ast}(H_\mathcal{T})$ cannot be edge-simple. Hence, by Proposition~\ref{lemma:edgesimple}, the re-embedding $\beta_\mathcal{T} (G)$ is not strong.
\end{proof}

To compute the facial walks of $\beta_\mathcal{T}(G)$ without applying the face traversal algorithm to the corresponding embedding scheme $(\rho,\lambda)$ of $G$, we proceed as follows: 
We first compute all facial walks of $\beta_{\mathcal{T}^\ast}(H_\mathcal{T})$ using the face traversal algorithm for $H_\mathcal{T}$ and $(\rho^{\ast},\lambda^{\ast})\big|_{H_\mathcal{T}}$, as defined above.
Since we are only interested in edge-simple facial walks by Proposition~\ref{lemma:edgesimple}, we exclude the cases where the facial walks of $\beta _{\mathcal{T^{\mathnormal{\ast }}}}(H_{\mathcal{T}})$ are not edge-simple.
For each edge-simple facial walk, we compute the dual facial walk as described in Definition~\ref{def:dualcycles}. From these dual walks, we derive the sequence of twisted and visited edges, which in turn yield the facial walks of $\beta_\mathcal{T}(G)$ that contain twisted edges, as shown in Lemma~\ref{lemma:dualCycle}.
This method now serves as the basis for characterizing which types of subgraphs fail to produce strong re-embeddings.

\begin{lemma} \label{lemma:notStrong}
    Let $G$ be a $3$-connected cubic planar graph and $\mathcal{T}\subseteq E(G)$. If $H_\mathcal{T}$ has a vertex of degree one or is isomorphic to $K_{1,1,m}$ for $m\geq 1$, the re-embedding $\beta_\mathcal{T}(G)$ is not strong.
\end{lemma}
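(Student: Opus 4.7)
The plan is to dispatch the two cases of the hypothesis separately, each time invoking an earlier tool.

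If $H_\mathcal{T}$ has a vertex of degree one, the unique edge incident to that vertex is a bridge of $H_\mathcal{T}$, and Corollary~\ref{cor:bridge} immediately yields that $\beta_\mathcal{T}(G)$ is not strong. This case is essentially free.

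The substantive case is $H_\mathcal{T}\cong K_{1,1,m}$ with $m\ge 1$. Write $u,v$ for the two vertices of degree $m+1$ and $w_1,\dots,w_m$ for the degree-two vertices; the edges of $H_\mathcal{T}$ are $\{u,v\}$ together with the length-two paths $u\,w_i\,v$ for $i=1,\dots,m$. The strategy is to exhibit a single facial walk of $\beta_\mathcal{T}(H_\mathcal{T})$ that is not edge simple and then conclude via Lemma~\ref{lemma:edgesimple}. The key planar observation is that both faces of $H_\mathcal{T}$ whose boundary contains $\{u,v\}$ are triangles: since each $w_i$ has degree two with only $u$ and $v$ as neighbours, any face through $w_i$ must use $\{u,w_i\}$ and $\{v,w_i\}$ consecutively, so the two faces flanking $\{u,v\}$ are of the form $u\,v\,w_a$ and $u\,v\,w_b$, where $\{u,w_a\}=\rho_u(\{u,v\})$ and $\{u,w_b\}=\rho_u^{-1}(\{u,v\})$ are the two cyclic neighbours of $\{u,v\}$ in $\rho_u$ (with $a=b$ when $m=1$). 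Planarity at $v$ then forces $\rho_v^{-1}(\{u,v\})=\{v,w_a\}$ and $\rho_v(\{u,v\})=\{v,w_b\}$.

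Running Algorithm~\ref{alg:FaceTrav} on $H_\mathcal{T}$ starting at $u$ with $f=\{u,v\}$ and $\kappa=1$ is then a six-step mechanical unwinding. Since every edge of $H_\mathcal{T}$ is twisted, $\kappa$ flips after each traversal, so the algorithm alternately consults $\rho^{-1}$ and $\rho$; the rotations above force the walk
\[
u\longrightarrow v\longrightarrow w_a\longrightarrow u\longrightarrow v\longrightarrow w_b\longrightarrow u,
\]
which returns to $e=f$ with $\kappa=1$ after exactly six edges and terminates. This facial walk traverses $\{u,v\}$ twice, so it is not edge simple, and Lemma~\ref{lemma:edgesimple} closes the argument.

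The only real obstacle is justifying the triangular structure of the two faces flanking $\{u,v\}$; once that planar picture is in place, the face traversal is routine. A useful sanity check is $m=1$, where $H_\mathcal{T}=K_3$, one has $a=b=1$, and the walk above traverses each of the three edges of $K_3$ twice, matching the single facial walk one obtains directly from the projective-plane embedding of $K_3$.
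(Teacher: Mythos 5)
Your proposal is correct and follows essentially the same route as the paper: the degree-one case is dispatched via Corollary~\ref{cor:bridge}, and for $K_{1,1,m}$ the paper likewise runs the face traversal procedure starting at the edge joining the two high-degree vertices and obtains the walk $(x,y,m,x,y,1)$, which traverses that edge twice. Your version merely makes explicit the planarity argument for the rotation system that the paper handles by fixing a drawing ``without loss of generality.''
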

\begin{proof}
    If $H_\mathcal{T}$ has a vertex of degree one, $H_\mathcal{T}$ has a bridge. Thus, the re-embedding $\beta_\mathcal{T}(G)$ is not strong by Corollary~\ref{cor:bridge}.
    
    Let $H_\mathcal{T}$ be isomorphic to $K_{1,1,m}$  for $m\geq 1$ with the partition sets $\{w_1\},\{w_2\},\{w_3,\dots,w_{m+2}\}$. Without loss of generality, we may assume that $K_{1,1,m}$ is planar embedded as shown in Figure~\ref{K11m}, in which the partition sets are distinguished by colour.
    We compute the facial walks of $\beta_{\mathcal{T}^\ast}(H_\mathcal{T})$ with the face traversal algorithm, starting at vertex $w_1$ with the edge $\{w_1,w_2\}$ in clockwise orientation. Then we obtain the facial walk $(w_1,w_2,w_{m+2},w_1,w_2,w_3)$. This facial walk contains the edge $\{w_1,w_2\}$ twice, so it is not a cycle and therefore the re-embedding $\beta_\mathcal{T}(G)$ is not strong.
\end{proof}
\begin{figure}[H]
        \centering
        \begin{tikzpicture}[scale=1.8]
            \tikzset{knoten/.style={circle,fill=black,inner sep=0.7mm}}
            \node [knoten,label=left:$w_3$,red] (a) at (-0.5,0) {};
            \node [knoten,label=right:$w_4$,red] (b) at (0,0) {};
            \node [knoten,label=left:\small $w_{m+1}$,red] (c) at (3,0) {};
            \node [knoten,label=right:$w_{m+2}$,red] (d) at (3.5,0) {};
            \node [knoten,label=$w_1$,blue] (e) at (1.5,1) {};
            \node [knoten,label=below:$w_2$,teal] (f) at (1.5,-1) {};
            \node (u) at (1.5,0) {$\dots$};
            
            \draw[-,thick] (e) to (a);
            \draw[-,thick] (e) to (b);
            \draw[-,thick] (e) to (c);
            \draw[-,thick] (e) to (d);
            \draw[-,thick] (f) to (a);
            \draw[-,thick] (f) to (b);
            \draw[-,thick] (f) to (c);
            \draw[-,thick] (f) to (d);
    
            \draw[thick] (1.4,1) arc
                [
                    start angle=90,
                    end angle=270,
                    x radius=2.5cm,
                    y radius =1cm
                ] ;
        \end{tikzpicture}
        \caption{Planar embedding of $H_\mathcal{T}\cong K_{1,1,m}$}
        \label{K11m}
    \end{figure}
 
The following theorem provides a criterion
for determining whether a given twisted subgraph yields a strong re-embedding. We will apply this criterion to each of Enami's subgraphs to assess whether they define a strong re-embedding.
\begin{theorem}\label{theorem:strong}
    Let $G$ be a $3$-connected cubic planar graph and $H_\mathcal{T}$ the twisted subgraph of $G^{\ast}$ for $\mathcal{T}\subseteq E(G)$ such that the facial walks of $\beta_{\mathcal{T}^\ast}(H_\mathcal{T})$ are edge-simple.
    The re-embedding $\beta_\mathcal{T}(G)$ is strong if and only if for every dual facial walk $$C^\ast=\left(w_1^{M_1},\dots,w_{\ell}^{M_{\ell}} \right)$$ of $H_\mathcal{T}$ in $G^{\ast}$, the visited edges $M_1,\dots, M_\ell$ are pairwise disjoint.
\end{theorem}
\begin{proof}
    \begin{itemize}
        \item["$\Rightarrow$"] Assume that the edge sets $M_a$ and $M_b$ of $C^\ast$ are not disjoint for $a,b\in\{1,\dots,\ell\}$. Let $e$ be an edge with $e\in M_a\cap M_b$, i.e.\ we visit $e$ once at the vertex $w_a$ and once at $w_b$ in $C^\ast$.
        The dual facial walk $C^\ast$ of $H_\mathcal{T}$ can be translated to a facial walk $C$ of $\beta_\mathcal{T}(G)$ by $\psi$ the inverse map of $\varphi$ as described in Definition~\ref{def:corr} and the proof of Lemma~\ref{lemma:dualCycle}. Thus, the dual edge of $e$ is traversed twice by the facial walk $C$: once in the face of the planar embedding of $G$ corresponding to $w_a$ and once in the face corresponding to $w_b$.
        This means that the re-embedding $\beta_\mathcal{T}(G)$ is not strong.
        \item["$\Leftarrow$"]
        Assume that the re-embedding $\beta_\mathcal{T}(G)$ is not strong.
        The faces of the planar embedding of $G$ without a twisted edge are facial cycles of $\beta_\mathcal{T}(G)$. Since $G$ is $3$-connected, we only have to consider the facial walks of $\beta_\mathcal{T}(G)$ with at least one twisted edge. These are precisely the facial walks of $\beta_\mathcal{T}(G)$ obtained from the dual facial walks of $H_\mathcal{T}$ in $G^\ast$ as shown in Lemma~\ref{lemma:dualCycle}. 
        Since $\beta_\mathcal{T}(G)$ is not strong, there exists a facial walk $C$ of $\beta_\mathcal{T}(G)$ that is not a cycle.
        Thus, $C$ must traverse an edge $e\in E(G)\setminus\mathcal{T}$ twice, since we assumed that the facial walks of $\beta_{\mathcal{T}^\ast}(H_\mathcal{T})$ are edge-simple.
        Hence, in the dual facial walk $C^\ast$ of $H_\mathcal{T}$ in $G^\ast$ corresponding to $C$ exist two vertices $w_a$ and $w_b$ at which the dual edge of $e$ occurs among the visited edges $M_a$ and $M_b$.
        So the sets $M_j$ for $j\in\{1,\dots,\ell\}$ are not pairwise disjoint.
    \end{itemize}
\end{proof}

\begin{remark}\label{remark:faces}
    Within the framework of the above theorem, it follows directly that if the visited edges of a given facial walk of $\beta_{\mathcal{T}^\ast}(H_\mathcal{T})$ all lie in distinct faces of the planar embedding of $H_\mathcal{T}$ induced by the planar embedding of $G$, then the corresponding facial walk of $\beta_\mathcal{T}(G)$ is a cycle. Consequently, if this condition is satisfied for all dual facial walks of $H_\mathcal{T}$ in $G^\ast$, then the re-embedding $\beta_\mathcal{T}(G)$ is strong. Note that the opposite direction is not true in general.
\end{remark}

The simplification of Theorem \ref{theorem:strong} described in the remark is useful in situations where the graph under consideration is large and it suffices, as a first step, to determine in which faces the visited edges lie.

\begin{example}
    In Example~\ref{ex:torus} and \ref{ex:dualcycle} the considered twisted subgraph $H_\mathcal{T}$ is isomorphic to $K_{2,2}$. Note that $\beta_{\mathcal{T}^\ast}(H_\mathcal{T})$ for $H_\mathcal{T}\cong K_{2,2}$ has two facial walks both containing the same edges but with different visited edges.
    In Figure~\ref{fig:dualCycle} the planar embedding of $G^\ast$ of Example~\ref{ex:torus} is drawn where the visited edges of the two facial walks of $\beta_{\mathcal{T}^\ast}(H_\mathcal{T})$ are highlighted in teal and green, respectively. Both facial walks of $\beta_{\mathcal{T}^\ast}(H_\mathcal{T})$ have visited edges that lie in the same face of $H_\mathcal{T}$. However, all visited edges are incident to only one vertex of $H_\mathcal{T}$, i.e.\ the visited edges are all disjoint and so the re-embedding $\beta_\mathcal{T}(G)$ is strong.
    In Example~\ref{ex:dualcycle} the visited edges of the facial walk illustrated in Figure~\ref{subfig:dualcubic6} at the vertices $b$ and $d$ intersect in the edge $\{b,d\}$. Thus, the re-embedding described in Example~\ref{ex:dualcycle} is not strong.
\end{example}
\begin{figure}[h!]
    \centering
        \begin{tikzpicture}[scale=2.2]  
            \tikzset{knoten/.style={circle,fill=black,inner sep=0.7mm}}

            \draw[-,ultra thick, teal] (1,1) arc (90:-90:1cm);
            
            \node [knoten, label={[cyan]:$a$}] (a) at (0,0) {};
            \node [knoten, label={[cyan]below left:$b$}] (b) at (0.75,0) {};
            \node [knoten, label={[cyan]$f$}] (c) at (1,1) {};
            \node [knoten, label={[cyan]below:$g$}] (d) at (1,-1) {};
            \node [knoten, label={[cyan]left:$e$}] (e) at (1.3,-0.35) {};
            \node [knoten, label={[cyan]left:$c$}] (f) at (1.25,0.4) {};
            \node [knoten, label={[cyan]$d$}] (g) at (1.85,0) {};
    
            \draw[-,ultra thick,green] (a) to (b);
            \draw[-,thick] (a) to (c);
            \draw[-,ultra thick, teal] (a) to (d);
            \draw[-,ultra thick,green] (b) to (c);
            \draw[-,very thick,red] (b) to (d);
            \draw[-,ultra thick,teal] (b) to (e);
            \draw[-,very thick,red] (b) to (f);
            \draw[-,ultra thick, teal] (c) to (f);
            \draw[-,ultra thick, green] (c) to (g);
            \draw[-,ultra thick, green] (d) to (e);
            \draw[-,very thick,red] (d) to (g);
            \draw[-,ultra thick, green] (e) to (f);
            \draw[-,ultra thick, teal] (e) to (g);
            \draw[-,very thick,red] (f) to (g);
        \end{tikzpicture}
    \caption{The planar embedding of $G^{\ast}$ with $G$ as drawn in Figure~\ref{subfig:G} where the edges of $H_\mathcal{T}\cong K_{2,2}$ are coloured red and the disjoint visited edges of the two facial walks of the re-embedding $\beta_{\mathcal{T}^\ast}(H_\mathcal{T})$ are coloured teal and green.}
    \label{fig:dualCycle}
\end{figure}

\subsection{On the Projective Plane}
In this section, we consider strong re-embeddings of $3$-connected cubic planar graphs on the projective plane, i.e.\ on a non-orientable surface of genus 1. The theorem of Enami states that we need to find subgraphs of the dual graph that are isomorphic to $K_2$ or $K_4$ to obtain general embeddings on the projective plane. Note that Lemma~\ref{lemma:notStrong} already establishes that twisted subgraphs which are isomorphic to $K_2$ do not lead to strong re-embeddings. Thus the question arises, whether a twisted subgraph isomorphic to $K_4$ always leads to a strong re-embedding.

\begin{lemma}\label{lemma:K4strong}
    Let $G$ be a $3$-connected cubic planar graph and $\mathcal{T}\subseteq E(G)$. If $H_\mathcal{T}$ is isomorphic to $K_4$, then the re-embedding $\beta_\mathcal{T}(G)$ is strong.
\end{lemma}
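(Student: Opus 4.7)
My plan is to apply Theorem~\ref{theorem:strong}. The first step is to verify its hypothesis, namely that the facial walks of $\beta_\mathcal{T}(H_\mathcal{T})$ are edge simple. Since $K_4$ is 3-connected, its planar embedding is unique (Whitney), so the rotation system on $H_\mathcal{T}$ inherited from the planar embedding of $G^{\ast}$ must coincide with this unique rotation. With every edge of $K_4$ twisted, a direct run of the face traversal procedure, started in turn at the three corners of a reference vertex, shows that there are exactly three facial walks, each a Hamiltonian $4$-cycle of $K_4$ obtained by omitting one of its three perfect matchings. (Equivalently, Euler's formula on the projective plane yields $F = 1 - 4 + 6 = 3$ faces.) In particular, every facial walk of $\beta_\mathcal{T}(H_\mathcal{T})$ is a cycle and hence edge simple.

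Having verified the hypothesis, I argue by contradiction via Theorem~\ref{theorem:strong}. Suppose $\beta_\mathcal{T}(G)$ is not strong. Then some dual facial walk $\mathcal{C}_i = (w_1^{V_1^i}, \dots, w_{n_i}^{V_{n_i}^i})$ contains indices $\ell \neq m$ with $w_\ell \in V_m^i$. By the definition of visited vertices (which are endpoints of untwisted edges of $G^{\ast}$ incident to $w_m$), this means that $G^{\ast}$ contains an untwisted edge joining $w_m$ to $w_\ell$. However, both $w_\ell$ and $w_m$ lie in $V(H_\mathcal{T}) = V(K_4)$, and since $K_4$ is complete the edge $\{w_\ell, w_m\}$ is already present in $H_\mathcal{T}$ and is therefore a twisted edge of $G^{\ast}$. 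Because $G$ is 3-connected planar, $G^{\ast}$ is simple, so at most one edge joins $w_\ell$ and $w_m$ in $G^{\ast}$; this contradicts the coexistence of a twisted and an untwisted edge between them. Hence no such pair exists, and by Theorem~\ref{theorem:strong} the re-embedding $\beta_\mathcal{T}(G)$ is strong.

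The main obstacle I expect is the first step: rigorously pinning down the facial walks of $\beta_\mathcal{T}(K_4)$ and confirming they are edge simple, rather than merely closed walks that revisit some edge. Once this is done, the short contradiction in the second step follows immediately from the completeness of $K_4$ combined with the simplicity of $G^{\ast}$.
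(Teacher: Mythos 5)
Your argument is correct and follows essentially the same route as the paper: both verify that the facial walks of $\beta_\mathcal{T}(K_4)$ are the three quadrilaterals, then apply Theorem~\ref{theorem:strong} and use the completeness of $K_4$ to show a shared visited edge would force two faces of $G$ to meet in two edges, which is impossible (your appeal to the simplicity of $G^{\ast}$ is the same fact the paper draws from Remark~\ref{remark:3connectedcubicplanar}). Your explicit check that the facial walks of $\beta_\mathcal{T}(H_\mathcal{T})$ are edge simple is a welcome bit of extra care that the paper leaves implicit.
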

\begin{proof}
    We first compute the facial walks of $\beta_{\mathcal{T}^\ast}(H_\mathcal{T})$, which yields three facial cycles of length four, see Figure~\ref{fig:K4proj}.
    Thus, let $C^\ast=(w_1^{M_1},w_2^{M_2},w_3^{M_3},w_4^{M_4})$ be a dual facial walk of $H_\mathcal{T}$ in $G^{\ast}$. Suppose $M_a$ and $M_b$ are not disjoint with $e\in M_a\cap M_b$ for $a,b\in\{1,\dots,4\}$ and $a\neq b$. This means that the vertices $w_a$ and $w_b$ of $H_\mathcal{T}$ are connected in $G^{\ast}$ by the untwisted edge $e$. However, $w_a$ and $w_b$ are also adjacent by a twisted edge, since $H_\mathcal{T}$ is a complete graph. This implies that the two faces $w_a$ and $w_b$ of the planar embedding of $G$ would intersect in two edges, which is not possible since $G$ is $3$-connected. So the statement follows with Theorem~\ref{theorem:strong}.
\end{proof}
\begin{figure}[H]
    \centering
    \begin{tikzpicture}[scale=1]
    \tikzset{knoten/.style={circle,fill=black,inner sep=0.7mm}}
    \node [knoten] (a) at (0,0) {};
    \node [knoten] (b) at (4,0) {};
    \node [knoten] (c) at (2,4) {};
    \node [knoten] (d) at (2,1.5) {};
    \draw[-,very thick] (a) to (b);
    \draw[-,very thick] (c) to (a);
    \draw[-,very thick] (d) to (c);
    \draw[-,very thick] (a) to (d);
    \draw[-,very thick] (b) to (c);
    \draw[-,very thick] (b) to (d);

    \draw[blue,very thick,rounded corners] (-0.05,0.15) -- (0.85,2) -- (1.17,2) -- (1.87,3.5) -- (1.87,2.5) -- (2.12,2.5) -- (2.12,1.55) -- (3,0.9) -- (3,0.6) -- (3.62,0.15) -- (2,0.15) -- (2,-0.15) -- (-0.2,-0.15) -- cycle;

    \draw[green!60!black,very thick,rounded corners] (0.22,0.25) -- (1,1.8) -- (0.95,2.2) -- (2,4.25) -- (3.05,2.2) -- (3,1.8) -- (3.78,0.25) -- (3.1,0.8) -- (2.8,0.8) -- (2,1.35) -- (1.2,0.8) -- (0.9,0.8) -- cycle;

    \draw[orange,very thick,rounded corners] (2,0.15) -- (2,-0.15) -- (4.2,-0.15) -- (3.25,1.8) -- (2.75,2.2) -- (2.13,3.5) -- (2.13,2.5) -- (1.87,2.5) -- (1.88,1.55) -- (1.1,1.) -- (1,0.6) -- (0.38,0.15) -- cycle;
\end{tikzpicture}
    \caption{The facial cycles of $\beta_{\mathcal{T}^\ast}(H_\mathcal{T})$ for $H_\mathcal{T}\cong K_4$.}
    \label{fig:K4proj}
\end{figure}

By the characterisation of Enami \cite{EnamiEmbeddings} together with Lemma~\ref{lemma:notStrong} and Lemma~\ref{lemma:K4strong} we directly obtain the first part of Theorem~\ref{theorem:main}.
Having established a complete characterization of strong re-embeddings on the projective plane, along with an implementation of the corresponding algorithm in \cite{simplicialsurfacegap}, we now investigate the number of $3$-connected cubic planar graphs that admit a strong embedding on the projective plane.

Let $\mathcal{G}_n$ denote the set of $3$-connected cubic planar graphs with $n$ vertices and let $\mathcal{P}_n$ denote the set of graphs in $\mathcal{G}_n$ that have a strong embedding on the projective plane. Table~\ref{table:projective} shows that there are few graphs in $\mathcal{G}_n$ for $n\geq 8$ which do not have a strong embedding on the projective plane. This means that their dual graphs have no subgraph isomorphic to $K_4$.

\begin{table}[h!]
    \centering
    \begin{tabular}[h]{|c|c|c|c|c|c|c|c|c|c|c|c|}
    \hline
    \textbf{$n$} & \textbf{ 4} &  \textbf{6}& \textbf{8} &\textbf{ 10} &\textbf{ 12} & \textbf{14}&\textbf{16}&\textbf{18}&\textbf{20}\\
    \hline
     $\mid\mathcal{G}_n\mid$ & 1& 1& 2& 5& 14& 50& 233& 1249& 7595\\
     \hline
     $\mid\mathcal{P}_n\mid$ & 1& 1& 1& 4& 12& 45& 222& 1219& 7485\\
     \hline
    \end{tabular}
    \caption{Number of graphs in $\mathcal{G}_n$ for $n\in\{4,6,8,10,12,14,16,18,20\}$ with strong embeddings on the projective plane.}
    \label{table:projective}
\end{table}
 
It is clear that each vertex of degree three in the dual graph defines a subgraph isomorphic to $K_4$. Thus, all dual graphs of $\mathcal{G}_n$ that do not have a strong embedding on the projective plane have minimum degree at least four. Thus, the number of $\mid\mathcal{G}_n\mid{-}\mid\mathcal{P}_n\mid$ is always less than or equal to the number of plane triangulations with $\frac{n}{2}+2$ vertices and minimum degree at least four.
Note that the authors of \cite{Dillencourt} enumerate the number of plane triangulations for a given number of vertices with minimum degree at least four. Their counts align with the numbers shown in Table~\ref{table:projective}.

\subsection{On the Torus}
In this section, we consider strong re-embeddings of $3$-connected cubic planar graphs on the torus, i.e.\ on an orientable surface of genus 1. The theorem of Enami states that we need to find subgraphs of the dual graph that are isomorphic to $K_{2,2,2}, K_{2,2m}$ or $K_{1,1,2m-1}$ with $m\geq 1$ to obtain general embeddings on the torus.
By Lemma~\ref{lemma:notStrong} we already know that twisted subgraphs which are isomorphic to $K_{1,1,2m-1}$ for $m\geq 1$ do not lead to strong re-embeddings. Thus, the question is whether a twisted subgraph which is isomorphic to $K_{2,2,2}$ or $K_{2,2m}$ for $m\geq 1$ always leads to a strong re-embedding.

\begin{lemma}\label{lemma:K222strong}
    Let $G$ be a $3$-connected cubic planar graph and $\mathcal{T}\subseteq E(G)$. If $H_\mathcal{T}$ is isomorphic to $K_{2,2,2}$, then the re-embedding $\beta_\mathcal{T}(G)$ is strong.
\end{lemma}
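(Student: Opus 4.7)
The plan is to mirror the proof of Lemma \ref{lemma:K4strong} and reduce everything to Theorem \ref{theorem:strong}. First I would compute the facial walks of $\beta_{\mathcal{T}}(H_{\mathcal{T}})$ where $H_{\mathcal{T}}\cong K_{2,2,2}$. Since $K_{2,2,2}$ is the octahedron, its planar embedding has eight triangular faces. The embedding scheme inherited from $(\rho^{\ast},\lambda^{\ast})|_{H_{\mathcal{T}}}$ twists every edge, so running the face-traversal procedure gives a fixed, small number of facial walks of the re-embedding on the torus. I would enumerate these explicitly, check that each is edge simple (so that Theorem \ref{theorem:strong} applies), and record their dual facial walks $\mathcal{C}_i=(w_1^{V_1^i},\ldots,w_{n_i}^{V_{n_i}^i})$ in $G^{\ast}$.

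Next I would apply Theorem \ref{theorem:strong}. Suppose towards a contradiction that some $\mathcal{C}_i$ contains vertices $w_a\neq w_b$ with $w_a\in V_b^i$ and $w_b\in V_a^i$. Then the planar faces $w_a$ and $w_b$ of $G$ share an untwisted edge of $G^{\ast}$, i.e.\ an edge not in $H_{\mathcal{T}}$. If in addition $w_a$ and $w_b$ are adjacent in $K_{2,2,2}$, they are also joined by a twisted edge, so the faces $w_a$ and $w_b$ of $G$ would meet in two edges, contradicting Remark \ref{remark:3connectedcubicplanar}. This is exactly the $K_4$-style argument and disposes of every pair $w_a,w_b$ belonging to different partition sets of $K_{2,2,2}$.

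The main obstacle is the remaining case, where $w_a$ and $w_b$ lie in the same partition set of $K_{2,2,2}$ and are therefore not adjacent in $H_{\mathcal{T}}$. Here the completeness argument fails and I need to use the explicit combinatorics of the torus facial walks. My plan is to exploit the vertex-transitive structure of the octahedron: each facial walk of $\beta_{\mathcal{T}}(H_{\mathcal{T}})$ meets each partition set in a controlled pattern, and for each dual facial walk I would check that whenever two vertices $w_a,w_b$ from the same partition set both appear, the cyclic positions of their visited edges in the planar embedding of $K_{2,2,2}$ place $V_a^i$ and $V_b^i$ in different faces of $H_{\mathcal{T}}$. Combined with the observation following Theorem \ref{theorem:strong} that visited edges lying in different faces of the planar embedding of $H_{\mathcal{T}}$ cannot coincide, this would force $M_a^i\cap M_b^i=\emptyset$.

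Putting both cases together rules out the bad configuration in Theorem \ref{theorem:strong}, so every dual facial walk produces a facial cycle of $\beta_{\mathcal{T}}(G)$, and together with the untouched planar faces of $G$ without twisted edges this yields a strong re-embedding. The technical heart is therefore the case analysis for same-partition pairs; once the octahedron's facial walks are written down explicitly this should reduce to a short, finite check driven by the symmetry of $K_{2,2,2}$.
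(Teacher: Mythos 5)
Your overall strategy matches the paper's: reduce to Theorem \ref{theorem:strong}, and split a hypothetical common visited edge $e\in M_a^i\cap M_b^i$ into the case where $w_a,w_b$ lie in different partition sets (where your argument --- twisted edge plus untwisted edge forces two planar faces of $G$ to meet in two edges, contradicting Remark \ref{remark:3connectedcubicplanar} --- is exactly the paper's) and the case where they lie in the same partition set. The difference is in how the second case is closed. You propose to enumerate the facial walks of $\beta_\mathcal{T}(H_\mathcal{T})$ explicitly and verify positionally that the visited edges of same-partition vertices land in different faces of the planar embedding of $H_\mathcal{T}$; you leave this as a ``short, finite check.'' The paper disposes of this case in one line: $K_{2,2,2}$ is an edge-maximal planar graph ($6$ vertices and $12=3\cdot 6-6$ edges, and it is $3$-connected so its spherical embedding is unique), so two vertices in the same partition set cannot be joined by \emph{any} edge of the planar graph $G^{\ast}$, twisted or not; hence $e$ cannot exist and $M_a^i\cap M_b^i=\emptyset$ automatically. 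Your finite check, if carried out, would succeed for essentially this reason --- every face of the octahedron's planar embedding is a triangle meeting each partition set exactly once, so no face has two same-partition vertices on its boundary and no edge of $G^{\ast}\setminus H_\mathcal{T}$ can join them --- but the enumeration of torus facial walks is unnecessary overhead. One small point in your favour: you explicitly flag that the facial walks of $\beta_\mathcal{T}(H_\mathcal{T})$ must be checked to be edge simple before Theorem \ref{theorem:strong} applies, a hypothesis the paper's proof uses only implicitly.
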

\begin{proof}
    The four facial walks of the re-embedding $\beta_{\mathcal{T}^\ast}(H_\mathcal{T})$ are edge-simple as depicted in Figure~\ref{fig:K222} and of length six.
    Let $C^\ast=(w_1^{M_1},\dots,w_6^6)$ be a dual facial walk of $H_\mathcal{T}$ in $G^{\ast}$. Assume, towards a contradiction, that the edge sets $M_a$ and $M_b$ for $a,b\in\{1,\dots,6\}$ are not disjoint, so that there exists an edge $e\in M_a\cap M_b$ with $a\neq b$. This assumption implies that $w_a$ and $w_b$ are adjacent in $G^{\ast}$ by the untwisted edge $e$. Consider the following two cases:
    \begin{enumerate}
        \item Suppose $w_a$ and $w_b$ are in different partition sets of $H_\mathcal{T}$. Then $w_a$ and $w_b$ are additionally connected by an edge of $H_\mathcal{T}$, which contradicts the fact that two faces of a planar embedded $3$-connected cubic graph can intersect in at most one edge.
        \item Suppose $w_a$ and $w_b$ are in the same partition set of $H_\mathcal{T}$. Since $H_\mathcal{T}\cong K_{2,2,2}$ is planar and $3$-connected, $H_\mathcal{T}$ is uniquely embeddable on the sphere. Figure~\ref{fig:K222} shows such an embedding, where the vertices of the different partition sets are highlighted in red, blue and green. Two vertices of the same partition set of $H_\mathcal{T}$ cannot be adjacent in $G^{\ast}$ by an untwisted edge, otherwise we could not embed $G^{\ast}$ on the sphere. So we get a contradiction to the fact that $w_a$ and $w_b$ are both incident to $e$.
    \end{enumerate}
    Thus, the edge sets $M_a$ and $M_b$ must be disjoint. Since the dual facial walk and the edge sets were chosen arbitrary, we have shown that that the visited edges are disjoint for all dual facial walks and the result follows by Theorem~\ref{theorem:strong}.
\end{proof}
\begin{figure}[h!]
    \centering
    \begin{tikzpicture}[scale=1.2]
            \tikzset{knoten/.style={circle,fill=black,inner sep=0.6mm}}
            \node [knoten,red!70!black] (a) at (0,0) {};
            \node [knoten,green!70!black] (b) at (0,2) {};
            \node [knoten,blue!70!black] (c) at (-2,0) {};
            \node [knoten,green!70!black] (d) at (0,-2) {};
            \node [knoten,blue!70!black] (e) at (2,0) {};
            \node [knoten,red!70!black] (f) at (4,0) {};
            \draw[-,thick] (a) to (b);
            \draw[-,thick] (a) to (c);
            \draw[-,thick] (a) to (d);
            \draw[-,thick] (a) to (e);
            \draw[-,thick] (b) to (c);
            \draw[-,thick] (b) to (e);
            \draw[-,thick] (c) to (d);
            \draw[-,thick] (d) to (e);

            \draw[-,thick] (f) to (b);
            \draw[-,thick] (f) to (d);
            \draw[-,thick] (f) to (e);
            
            \draw[-,thick] (-2,0.04) arc (180:0:3cm);;

            \draw[orange,very thick,rounded corners] (-1.8,0.1) -- (-1,0.1) -- (-1,-0.1) -- (-0.1,-0.1) -- (-0.1,-1) -- (0.1,-1) -- (0.1,-1.8) -- (0.95,-0.95) -- (1.1,-1) -- (2,-0.1) -- (3,-0.1) -- (3,0.1) -- (3.6,0.1) -- (2,0.9) -- (2,1.1) -- (0,2.1) -- (-1,1.1) -- (-1,0.9) -- cycle;

            \draw[purple,very thick,rounded corners] (-1.8,-0.1) -- (-1,-0.1) -- (-1,0.1) -- (-0.1,0.1) -- (-0.1,1) -- (0.1,1) -- (0.1,1.8) -- (0.9,0.95) -- (1.1,1) -- (2,0.1) -- (3,0.1) -- (3,-0.1) -- (3.6,-0.1) -- (2,-0.9) -- (2,-1.1) -- (0,-2.1) -- (-1,-1.1) -- (-1,-0.9) -- cycle; 

            \draw[blue!40!white,very thick,rounded corners] (0.1,0.1) -- (1,0.1) -- (1,-0.1) -- (1.8,-0.1) -- (1,-0.9) -- (1.1,-1.05) -- (0.4,-1.7) -- (2,-0.9) -- (2,-1.1) -- (4.1,-0.1) arc (-1:87:3.2cm) -- (1,2.95) arc (90:175:2.9cm) -- (-1,1.1) -- (-0.9,0.95) -- (-0.1,1.8) -- (-0.1,1) -- (0.1,1) -- cycle;

            \draw[green!40!black,very thick,rounded corners] (0.1,-0.1) -- (1,-0.1) -- (1,0.1) -- (1.8,0.1) -- (0.9,0.95) -- (1.1,1) -- (0.4,1.7) -- (2,0.9) -- (2,1.1) -- (3.9,0.15) arc (2:90:2.9cm) -- (1,3.1) arc (90:180:3.1cm) -- (-1.05,-1.05) -- (-0.95,-0.95) -- (-0.1,-1.8) -- (-0.1,-1) -- (0.1,-1) -- cycle;
        \end{tikzpicture}
    \caption{All facial walks of $\beta_{\mathcal{T}^\ast}(H_\mathcal{T})$ with $H_\mathcal{T}\cong K_{2,2,2}$, where vertices from the same partition set are assigned the same colour.}
    \label{fig:K222}
\end{figure}
 
So twisted subgraphs which are isomorphic to $K_{2,2,2}$ always lead to strong re-embeddings. In contrast, for twisted subgraphs isomorphic to $K_{2,2m}$ an additional property is needed. To establish this, we examine more closely the configuration of the visited edges at a vertex.
Recall that the Jordan Curve Theorem~\cite{JordanCurve} states that each cycle $C$ of a bridgeless planar graph divides the plane into the interior of $C$, bounded by $C$ and written as $int(C)$, and the exterior of $C$, written as $ext(C)$.

Let $G$ again be a $3$-connected cubic planar graph, $H_\mathcal{T}$ a twisted subgraph of $G^\ast$ and $\Tilde{C}$ a facial cycle of $\beta_{\mathcal{T}^\ast}(H_\mathcal{T})$. We consider $\Tilde{C}$ as a cycle in the planar embedding of $H_\mathcal{T}$ induced by the planar embedding of $G$. By the Jordan Curve Theorem the visited edges at a vertex $v$ of $\Tilde{C}$ are either all in $int(\Tilde{C})$ or all in $ext(\Tilde{C})$, depending on whether we consider the clockwise or anticlockwise successor at $v$.
\begin{lemma}\label{lemma:K2mstrong}
    Let $G$ be a $3$-connected cubic planar graph and $\mathcal{T}\subseteq E(G)$ such that $H_\mathcal{T}\cong K_{2,m}$ for $m\geq 2$. The re-embedding $\beta_\mathcal{T}(G)$ is strong if and only if the vertices of the partition set of size two of $H_\mathcal{T}$ are not adjacent in $G^{\ast}$. For $m=2$, guaranteeing a strong embedding requires that no edges of $G^{\ast }$ occur within either of the two partition sets.
\end{lemma}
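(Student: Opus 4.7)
My plan is to apply Theorem \ref{theorem:strong}, so I first need to understand the dual facial walks of $H_{\mathcal{T}} \cong K_{2,m}$ in $G^{\ast}$. Fix the partition sets of $H_{\mathcal{T}}$ as $\{x,y\}$ (size two) and $\{v_1,\dots,v_m\}$ (size $m$), with the induced planar embedding having the $m$ quadrilateral faces $(x,v_i,y,v_{i+1})$ for $i\in\{1,\dots,m\}$ (indices modulo $m$). Since $K_{2,m}$ is bipartite, neither $\{x,y\}$ nor any $\{v_i,v_j\}$ is a twisted edge, so any adjacency of $x$ and $y$ in $G^{\ast}$ comes from an \emph{untwisted} edge.

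I would first explicitly run the face traversal procedure for $H_{\mathcal{T}}$ with all edges twisted. A short computation along the cyclic rotation at $x$ and $y$ shows that the facial walks of $\beta_{\mathcal{T}}(H_{\mathcal{T}})$ have a uniform structure: each $v_i$ is traversed twice (once in each direction), and $x$ and $y$ appear together in one long facial walk that threads through all quadrilateral faces. Translating this into dual facial walks equips every occurrence of $x$ and $y$ with a tuple of visited vertices, one for each cyclic sector at $x$ (resp.\ $y$) cut off by a pair of consecutive twisted edges to the $v_i$'s.

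For the forward implication I would prove the contrapositive: assume $e=\{x,y\}\in E(G^{\ast})\setminus \mathcal{T}$. The edge $e$ lies inside some planar face $(x,v_i,y,v_{i+1})$ of $H_{\mathcal{T}}$, hence it occupies the sector at $x$ between the twisted edges $\{x,v_i\}$ and $\{x,v_{i+1}\}$, and symmetrically the sector at $y$ between $\{y,v_i\}$ and $\{y,v_{i+1}\}$. By tracing the face traversal procedure one checks that precisely this pair of sectors is covered by the \emph{same} dual facial walk $\mathcal{C}_k$. Hence $x$ appears in $\mathcal{C}_k$ with $y$ in its visited tuple and $y$ appears in $\mathcal{C}_k$ with $x$ in its visited tuple, and Theorem \ref{theorem:strong} forces $\beta_{\mathcal{T}}(G)$ to be non-strong.

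For the converse, assume $x$ and $y$ are not adjacent in $G^{\ast}$. Pick any dual facial walk $\mathcal{C}=(w_1^{V_1},\dots,w_n^{V_n})$ and suppose, for contradiction, positions $w_a,w_b$ share a visited edge, i.e.\ there is an untwisted edge $e'\in E(G^{\ast})$ from $w_a$ to $w_b$. A case split, mirroring the argument of Lemma \ref{lemma:K222strong}, closes the proof: if $w_a,w_b$ lie in different partition sets of $H_{\mathcal{T}}$, they are already joined by a twisted edge, so the two planar faces $w_a,w_b$ of $G$ would share two edges, violating Remark \ref{remark:3connectedcubicplanar}; if $\{w_a,w_b\}=\{x,y\}$, this contradicts our hypothesis; and if $w_a,w_b\in\{v_1,\dots,v_m\}$, the planar embedding of $H_{\mathcal{T}}$ with the visited edge $e'$ in a shared sector would force $w_a,w_b$ to be consecutive in the cyclic order at $x$ and at $y$, making $(x,w_a,y,w_b,x)$ with the added $e'$ a planar $K_4$-subdivision that again produces two faces of $G$ sharing two edges, contradicting Remark \ref{remark:3connectedcubicplanar}. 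Theorem \ref{theorem:strong} then yields that $\beta_{\mathcal{T}}(G)$ is strong.

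The main obstacle is the bookkeeping in the forward direction, namely confirming that the two sectors at $x$ and $y$ cut off by $e$ lie in a \emph{single} dual facial walk; this requires tracing the $\kappa$-flips in the face traversal procedure around $K_{2,m}$, but becomes routine once the global structure of the facial walks of $\beta_{\mathcal{T}}(K_{2,m})$ has been identified in the first step.
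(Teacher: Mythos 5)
Your overall strategy (reduce everything to Theorem \ref{theorem:strong} and check which pairs of visited-edge sets inside a single dual facial walk can intersect) is the same as the paper's, but two of your steps are wrong. First, your description of the facial walks of $\beta_{\mathcal{T}}(K_{2,m})$ is incorrect: there is no ``one long facial walk'' through all quadrilateral faces. Because every $v_i$ has degree two, the twists cancel in pairs along each traversal, and the face traversal procedure returns exactly $m$ facial cycles of length four, each coinciding (as a cycle) with a quadrilateral face $(x,v_i,y,v_{i+1})$ of the planar embedding of $H_{\mathcal{T}}$. Your key claim in the forward direction --- that the sector at $x$ and the sector at $y$ cut off by an untwisted edge $e=\{x,y\}$ are visited by the \emph{same} dual facial walk --- happens to be true (it is the walk corresponding to the quadrilateral face containing $e$), but the justification you offer rests on a false picture of the walk structure, so the ``routine bookkeeping'' you defer to is not actually routine until you correct that first step.

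Second, and more seriously, case (c) of your converse is broken. If $w_a,w_b\in\{v_1,\dots,v_m\}$ share a visited edge $e'$, the faces of $G$ dual to $w_a$ and $w_b$ meet in \emph{only} the edge dual to $e'$: since $G^{\ast}$ is simple and $w_a,w_b$ lie in the same partition class of $H_{\mathcal{T}}$, there is no second (twisted) edge between them, so no contradiction with Remark \ref{remark:3connectedcubicplanar} arises, and the claimed ``$K_4$-subdivision forcing two faces to share two edges'' does not exist. The correct reason this case is harmless for $m\geq 3$ is geometric: tracing the walk $(x,v_i,y,v_{i+1})$ shows its visited sectors at $v_i$ and at $v_{i+1}$ lie in the two \emph{neighbouring} faces of $H_{\mathcal{T}}$ (not in the face between them), which are distinct when $m\geq 3$, so the sets cannot intersect. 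For $m=2$ they are both the outer face, an untwisted edge $\{v_1,v_2\}$ there genuinely destroys strongness, and this must be excluded by the hypothesis --- note the lemma speaks of the partition \emph{sets} of size two, which for $K_{2,2}$ means both classes. Your proof reads the hypothesis as a condition on $\{x,y\}$ only, so your forward direction also misses the subcase where $x,y$ are non-adjacent but $v_1,v_2$ are adjacent in $G^{\ast}$ (with $m=2$), and your converse would ``prove'' strongness in a situation where the embedding is in fact not strong. This is precisely the distinction the paper's proof makes via its $int(\mathcal{C})$/$ext(\mathcal{C})$ analysis and its explicit reduction of the $M_3\cap M_4\neq\emptyset$ possibility to the case $m=2$.
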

\begin{proof}
    Let $H_\mathcal{T}$ consists of the vertices $w_1,\dots w_{m+2}$ where $w_1$ and $w_2$ are the vertices of the partition set of size $2$ and $w_3,\dots,w_{m+2}$ are the vertices of the partition set of size $m$.
    \begin{itemize}
        \item["$\Rightarrow$"] Let the two vertices $w_1$ and $w_2$ of the partition set of $H_\mathcal{T}$ of size two be connected by the edge $e\in E(G^{\ast})$. Then, without loss of generality, $H_\mathcal{T}\cup \{e\}$ is embedded on the plane as in Figure~\ref{subfig:K2mNotStrong}.
        \begin{figure}[H]
    \centering
    \begin{tikzpicture}[scale=1]
        \tikzset{knoten/.style={circle,fill=black,inner sep=0.7mm}}
        \node [knoten,label=right:$w_{m+2}$,blue] (a) at (4,0) {};
        \node [knoten,label=$w_1$,red] (b) at (2,1.5) {};
        \node [knoten,label=left:$w_3$,blue] (c) at (0,0) {};
        \node [knoten,label=below:$w_2$,red] (d) at (2,-1.5) {};
        \node (u) at (2,0) {$\dots$};
        
        \draw[-,thick] (a) to (b);
        \draw[-,thick] (b) to (c);
        \draw[-,thick] (c) to (d);
        \draw[-,thick] (d) to (a);

        \draw[-,thick] (b) to (2,0.7);
        \draw[-,thick] (b) to (2.5,0.7);
        \draw[-,thick] (b) to (1.5,0.7);

        \draw[-,thick] (d) to (2,-0.7);
        \draw[-,thick] (d) to (2.5,-0.7);
        \draw[-,thick] (d) to (1.5,-0.7);

        \draw[-,thick] (1.9,1.5) arc
                [
                    start angle=90,
                    end angle=270,
                    x radius=3cm,
                    y radius =1.5cm
                ] ;
    \end{tikzpicture}
    \caption{Planar embedding of $H_\mathcal{T}\cup \bigl\{ \{w_1,w_2\} \bigr\}$ with $H_\mathcal{T}\cong K_{2,m}$}
    \label{subfig:K2mNotStrong}
\end{figure}
        Consider the facial walk $C=(w_1,w_{m+2},w_2,w_3)$ in $\beta_{\mathcal{T}^\ast}(H_\mathcal{T})$. The visited edges at $w_1$ and $w_2$ of $C$ lie in $ext(C)$. Thus, the edge $\{w_1,w_2\}$ is a visited edge at $w_1$ and $w_2$ and so the re-embedding cannot be strong by Theorem~\ref{theorem:strong}.
        \item["$\Leftarrow$"] The dual graph $G^{\ast}$ contains a subgraph isomorphic to $K_{2,m}$, as drawn in Figure~\ref{fig:K2m}. The vertices of the partition set of size two, called $A\subseteq V(H_\mathcal{T})$, are coloured red and the vertices of the partition set of size $m$, called $B\subseteq V(H_\mathcal{T})$, are coloured blue.
        
        \begin{figure}[h!]
    \centering
    \begin{tikzpicture}[scale=1]
            \tikzset{knoten/.style={circle,fill=black,inner sep=0.7mm}}
            \node [knoten,blue] (a) at (0,0) {};
            \node [knoten,red,label=$w_1$] (b) at (2,1.5) {};
            \node [knoten,blue] (c) at (1,0) {};
            \node [knoten,red,label=below:$w_2$] (d) at (2,-1.5) {};
            \node [knoten,blue] (e) at (4,0) {};
            \node (u) at (3,0) {$\dots$};
            
            \draw[-,thick] (a) to (b);
            \draw[-,thick] (b) to (c);
            \draw[-,thick] (c) to (d);
            \draw[-,thick] (d) to (a);
            \draw[-,thick] (b) to (e);
            \draw[-,thick] (d) to (e);

            \draw[-,thick] (b) to (2,0.7);
            \draw[-,thick] (b) to (2.5,0.7);

            \draw[-,thick] (d) to (2,-0.7);
            \draw[-,thick] (d) to (2.5,-0.7);
        \end{tikzpicture}
    \caption{Planar embedding of $K_{2,m}$ for $m\geq 2$}
    \label{fig:K2m}
\end{figure}
        
        The re-embedding $\beta_{\mathcal{T}^\ast}(H_\mathcal{T})$ has $m$ facial cycles of length four, all of which are also facial walks of the planar embedding of $H_\mathcal{T}$. Such a facial walk $C$ of $\beta_{\mathcal{T}^\ast}(H_\mathcal{T})$ contains the vertices $w_1$ and $w_2$ of $A$ and two vertices of $B$, called $w_3$ and $w_4$. We consider the planar embedding of $H_\mathcal{T}$ induced by the planar embedding of $G$, which is drawn in Figure~\ref{fig:K2m}, so $w_3$ and $w_4$ must be embedded next to each other, see Figure~\ref{subfig:case2}, or one is the leftmost vertex and the other is the rightmost vertex, see Figure~\ref{subfig:case1}.
        Thus, the facial walk $C=(w_1,w_3,w_2,w_4)$ of $\beta_{\mathcal{T}^\ast}(H_\mathcal{T})$ can be an unbounded or bounded face of the planar embedding of $H_\mathcal{T}$.
        Let $C^\ast=(w_1^{M_1},w_3^{M_3},w_2^{M_2},w_4^{M_4})$ be the dual facial walk of $H_\mathcal{T}$ in $G^{\ast}$ corresponding to $C$ as described in Definition~\ref{def:corr}.
        \begin{itemize}
            \item[1.] Let $C=(w_1,w_3,w_2,w_4)$ be the unbounded face of the planar embedding of $H_\mathcal{T}$. Accordingly, the facial walk $C$ is depicted in blue in Figure~\ref{subfig:case1}. We start at $w_1$ with the edge $\{w_1,w_3\}$ in clockwise orientation. This means that the edges of $M_1$ lie in the unbounded face of $H_\mathcal{T}$. Now we have to consider the anticlockwise successor of $\{w_1,w_3\}$ at $w_3$, which is $\{w_2,w_3\}$. Here the edges we are visiting, i.e.\ $M_3$, are in $int(C)$. Continuing this way, we see that the edges of $M_2$ lie in the unbounded face of $H_\mathcal{T}$ and that the edges of $M_4$ are in $int(C)$. Thus, according to Theorem~\ref{theorem:strong} the re-embedding is strong if $M_1\cap M_2=\emptyset$ and $M_3\cap M_4=\emptyset$. If $M_3$ and $M_4$ would have a common edge, $m$ must be equal to $2$ because the common edge must be in $int(C)$. Since we assumed that the vertices of partition sets of size two are not adjacent, both pairs of edge sets are disjoint.
            \item[2.] Let $C=(w_1,w_3,w_2,w_4)$ be a bounded face of $H_\mathcal{T}$. Accordingly, the facial walk $C$ is depicted in blue in Figure~\ref{subfig:case2}. Analogous to the first case, we obtain that $M_1$ and $M_2$ are in $int(C)$. Moreover, $M_3$ and $M_4$ are in $ext(C)$. Thus, according to Theorem~\ref{theorem:strong} the re-embedding is strong if $M_1\cap M_2=\emptyset$ and $M_3\cap M_4=\emptyset$.
            If $M_3$ and $M_4$ would have a common edge, $m$ must be equal to $2$ because the common edge must be in $ext(C)$. Since we assumed that the vertices of partition sets of size two are not adjacent, both pairs of edge sets are disjoint.
        \end{itemize}
    \end{itemize}
\end{proof}
\begin{figure}[h!]
    \centering
    \begin{subfigure}{.4\textwidth}
        \centering
        \begin{tikzpicture}[scale=1.2]
            \tikzset{knoten/.style={circle,fill=black,inner sep=0.7mm}}
            \node [knoten,label=right:$w_3$] (a) at (4,0) {};
            \node [knoten,label=$w_1$] (b) at (2,1.5) {};
            \node [knoten,label=left:$w_4$] (c) at (0,0) {};
            \node [knoten,label=below:$w_2$] (d) at (2,-1.5) {};
            \node (u) at (2,0) {$\dots$};
            
            \draw[-,thick] (a) to (b);
            \draw[-,thick] (b) to (c);
            \draw[-,thick] (c) to (d);
            \draw[-,thick] (d) to (a);

            \draw[-,thick] (b) to (2,0.7);
            \draw[-,thick] (b) to (2.5,0.7);
            \draw[-,thick] (b) to (1.5,0.7);

            \draw[-,thick] (d) to (2,-0.7);
            \draw[-,thick] (d) to (2.5,-0.7);
            \draw[-,thick] (d) to (1.5,-0.7);

            \draw[blue,very thick,rounded corners] (2,1.7) -- (3,1) -- (2.8,0.75) -- (3.8,0) -- (2.8,-0.75) -- (3,-1) -- (2,-1.7) -- (1,-1) -- (1.2,-0.75) -- (0.2,0) -- (1.2,0.75) -- (1,1) -- cycle;
        \end{tikzpicture}
        \caption{}
        \label{subfig:case1}
    \end{subfigure}
    \begin{subfigure}{.4\textwidth}
        \centering
        \begin{tikzpicture}[scale=1.2]
            \tikzset{knoten/.style={circle,fill=black,inner sep=0.7mm}}
            \node [knoten,label=left:$w_3$] (a) at (0,0) {};
            \node [knoten,label=$w_1$] (b) at (2,1.5) {};
            \node [knoten,label=right:$w_4$] (c) at (1,0) {};
            \node [knoten,label=below:$w_2$] (d) at (2,-1.5) {};
            \node [knoten] (e) at (4,0) {};
            \node (u) at (3,0) {$\dots$};
            
            \draw[-,thick] (a) to (b);
            \draw[-,thick] (b) to (c);
            \draw[-,thick] (c) to (d);
            \draw[-,thick] (d) to (a);
            \draw[-,thick] (b) to (e);
            \draw[-,thick] (d) to (e);

            \draw[-,thick] (b) to (2,0.7);
            \draw[-,thick] (b) to (2.5,0.7);

            \draw[-,thick] (d) to (2,-0.7);
            \draw[-,thick] (d) to (2.5,-0.7);

            \draw[blue,very thick,rounded corners] (1.7,1.2) -- (1.4,0.8) -- (1.5,0.5) -- (1.15,0) -- (1.5,-0.5) -- (1.4,-0.8) -- (1.7,-1.2) -- (1.1,-0.7) -- (0.6,-0.7) -- (-0.2,0) -- (0.6,0.7) -- (1.1,0.7) -- cycle;
        \end{tikzpicture}
        \caption{}
        \label{subfig:case2}
    \end{subfigure}
    \caption{Planar embeddings of $H_\mathcal{T}\cong K_{2,m}$ with $(w_1,w_3,w_2,w_4)$ a face of $H_\mathcal{T}$ and the corresponding facial walk of $\beta_{\mathcal{T}^\ast}(H_\mathcal{T})$ coloured in blue.}
    \label{fig:K2m2}
\end{figure}

By the characterisation of Enami \cite{EnamiEmbeddings} together with Lemma~\ref{lemma:notStrong}, Lemma~\ref{lemma:K222strong} and Lemma~\ref{lemma:K2mstrong} we directly obtain the second part of Theorem~\ref{theorem:main}.
Having established a complete characterization of strong re-embeddings on the torus, along with an implementation of the corresponding algorithm in \cite{simplicialsurfacegap}, we now investigate the number of $3$-connected cubic planar graphs that admit a strong embedding on the torus.

Let $\mathcal{G}_n$ again denote the set of $3$-connected cubic planar graphs with $n$ vertices. Moreover, $\mathcal{R}_n$ denotes the set of graphs in $\mathcal{G}_n$ that have strong embeddings on the torus. Table~\ref{table:torus} shows that there are several graphs in $\mathcal{G}_n$, which do not have a strong embedding on the torus.

\begin{table}[H]
    \centering
    \begin{tabular}[h]{|c|c|c|c|c|c|c|c|c|c|c|c|}
\hline
\textbf{$n$} & \textbf{ 4} &  \textbf{6}& \textbf{8} &\textbf{ 10} &\textbf{ 12} & \textbf{14}&\textbf{16}&\textbf{18}&\textbf{20}\\
\hline
 $\mid\mathcal{G}_n\mid$ & 1& 1& 2& 5& 14& 50& 233& 1249& 7595\\
 \hline
 $\mid\mathcal{R}_n\mid$ & 0& 0& 1& 2& 7& 26& 140& 815& 5484\\
 \hline
\end{tabular}
\caption{Number of graphs in $\mathcal{G}_n$ for $n\in\{4,6,8,10,12,14,16,18,20\}$ with strong embeddings on the torus.}
\label{table:torus}
\end{table}

\subsection{On the Klein Bottle}
In this section we consider strong re-embeddings of $3$-connected cubic planar graphs on the Klein bottle, i.e.\ on a non-orientable surface of genus 2. The theorem of Enami states that we need to find subgraphs of the dual graph that are isomorphic to the graphs $A_1,\dots A_6$, $K_{2,2m-1}$ or $K_{1,1,2m}$ with $m\geq 1$ to obtain general re-embeddings on the Klein bottle.
By Lemma~\ref{lemma:notStrong} we already know that twisted subgraphs which are isomorphic to $K_{1,1,2m}$ do not lead to strong re-embeddings. In addition, by Corollary~\ref{cor:bridge} we know that $K_{2,1}$ also does not yield a strong re-embedding.
Moreover, Lemma~\ref{lemma:K2mstrong} implies that considering the subgraphs of $G^{\ast}$ which are isomorphic to $K_{2,2m-1}$ for $m\geq 2$, where the vertices of the partition set of size two are not adjacent in $G^{\ast}$ is enough.
To get a complete characterisation for strong re-embeddings on the Klein bottle we need to take a look at the graphs $A_1,\dots A_6$, see Figure~\ref{fig:SubgraphsKleinBottle} and \ref{fig:SubgraphsKleinBottleGeneral}. Each of the graphs $A_1$, $A_2$, and $A_4$ contains a bridge and therefore does not define a strong embedding by Corollary~\ref{cor:bridge}.
\begin{lemma}\label{lemma:Bstrong}
     Let $G$ be a $3$-connected cubic planar graph and $\mathcal{T}\subseteq E(G)$. If $H_\mathcal{T}$ is isomorphic to $A_3, A_5$ or $A_6$, then the re-embedding $\beta_\mathcal{T}(G)$ is strong.
\end{lemma}
\begin{proof}
    The graph $A_3$ is equal to the disjoint union of two copies of $K_4$.
    Since the facial walks of these two components of $K_4$ are disjoint and edge-simple, Theorem~\ref{theorem:strong} and Lemma \ref{lemma:K4strong} implies directly that the re-embedding $\beta_\mathcal{T}(G)$ is strong for $H_\mathcal{T}\cong A_3$.
    
    The graph $A_5$ consists of two copies of the $K_4$ joined at a single vertex $v$. In the embedding $\beta _{\mathcal{T^{\mathnormal{\ast }}}}(H_{\mathcal{T}})$, where $H_{\mathcal{T}}\cong A_5$, all but one facial walk contain vertices from only one of the two $K_4$ components.
    By Lemma~\ref{lemma:K4strong}, the visited edges of these facial walks are disjoint. The remaining facial walk, illustrated in Figure~\ref{fig:A5}, traverses $v$ twice. However, the two sets of visited edges at $v$ must be disjoint since otherwise a loop in $G^\ast$ would occur. All other visited edges lie in distinct faces and are therefore disjoint by Remark~\ref{remark:faces}.
    Furthermore, all facial walks of $\beta _{\mathcal{T^{\mathnormal{\ast }}}}(H_{\mathcal{T}})$ are edge-simple and therefore Theorem~\ref{theorem:strong} implies that the re-embedding $\beta_\mathcal{T}(G)$ is strong.

    For $H_\mathcal{T}\cong A_6$ the facial walks of $\beta_{\mathcal{T}^\ast}(H_\mathcal{T})$ are edge-simple as depicted in Figure~\ref{fig:A6}.
    It can be easily seen that all visited edges of the facial walks of $\beta_{\mathcal{T}^\ast}(H_\mathcal{T})$ lie in distinct faces of $A_6$, and therefore, by Remark~\ref{remark:faces} and Theorem~\ref{theorem:strong}, the re-embedding $\beta _{\mathcal{T}}(G)$ is strong.
\end{proof}

\begin{figure}[h!]
    \centering
    \begin{subfigure}{0.45\textwidth}
        \centering
        \raisebox{22pt}{\begin{tikzpicture}[scale=3.5]
    \tikzset{knoten/.style={circle,fill=black,inner sep=0.7mm}}
    \node [knoten] (a) at (0,0) {};
    \node [knoten] (b) at (0,1) {};
    \node [knoten] (c) at (0.5,0.5) {};
    \node [knoten] (d) at (1,0.5) {};
    \node [knoten] (e) at (1.5,0.5) {};
    \node [knoten] (f) at (2,1) {};
    \node [knoten] (g) at (2,0) {};
    
    \draw[-,thick] (a) to (b);
    \draw[-,thick] (a) to (c);
    \draw[-,thick] (a) to (d);
    \draw[-,thick] (b) to (c);
    \draw[-,thick] (b) to (d);
    \draw[-,thick] (c) to (d);
    
    \draw[-,thick] (e) to (f);
    \draw[-,thick] (e) to (g);
    \draw[-,thick] (e) to (d);
    \draw[-,thick] (f) to (g);
    \draw[-,thick] (f) to (d);
    \draw[-,thick] (g) to (d);

    \draw[blue,very thick,rounded corners] (1,0.55) -- (1.4,0.75) -- (1.5,0.7) -- (1.85,0.88) -- (1.65,0.7) -- (1.74,0.7) -- (1.55,0.5) -- (1.74,0.3) -- (1.65,0.3) -- (1.85,0.12) -- (1.5,0.3) -- (1.4,0.25) -- (1,0.45) -- (0.6,0.25) -- (0.5,0.3) -- (0.15,0.12) -- (0.35,0.3) -- (0.26,0.3) -- (0.45,0.5) -- (0.26,0.7) -- (0.35,0.7) -- (0.15,0.88) -- (0.5,0.7) -- (0.6,0.75) -- cycle;
\end{tikzpicture}}
        \caption{}
        \label{fig:A5}
    \end{subfigure}
    \begin{subfigure}{0.45\textwidth}
        \centering
        \begin{tikzpicture}[scale=4]
    \tikzset{knoten/.style={circle,fill=black,inner sep=0.7mm}}
    \node [knoten] (a) at (0,0) {};
    \node [knoten] (b) at (0.5,0) {};
    \node [knoten] (c) at (1,0.5) {};
    \node [knoten] (d) at (1,-0.5) {};
    \node [knoten] (e) at (1.5,0) {};
    \node [knoten] (f) at (2,0) {};
    
    \draw[-,thick] (a) to (b);
    \draw[-,thick] (a) to (c);
    \draw[-,thick] (a) to (d);
    \draw[-,thick] (b) to (c);
    \draw[-,thick] (b) to (d);
    \draw[-,thick] (c) to (e);
    \draw[-,thick] (c) to (f);
    \draw[-,thick] (d) to (e);
    \draw[-,thick] (d) to (f);
    \draw[-,thick] (e) to (f);

    \draw[blue,very thick,rounded corners] (1,0.45) -- (0.8,0.25) -- (0.65,0.2) -- (0.5,0.03) -- (0.3,0.03) -- (0.26,-0.03) -- (0.1,-0.03) -- (0.47,-0.21) --  (0.5,-0.3) -- (1,-0.55) -- (1.5,-0.3) -- (1.53,-0.21) -- (1.9,-0.03) -- (1.74,-0.03) -- (1.7,0.03) -- (1.5,0.03) -- (1.35,0.2) -- (1.2,0.25) -- cycle;

    \draw[purple,very thick,rounded corners] (1,0.55) -- (0.5,0.3) -- (0.48,0.22) -- (0.1,0.03) -- (0.25,0.03) -- (0.3,-0.03) -- (0.5,-0.03) -- (0.66,-0.2) -- (0.8,-0.25) -- (1,-0.45) -- (1.2,-0.25) -- (1.34,-0.2) -- (1.5,-0.03) -- (1.7,-0.03) -- (1.75,0.03) -- (1.9,0.03) -- (1.55,0.2) -- (1.5,0.3) -- cycle;

    \draw[orange,very thick,rounded corners] (0.55,0) -- (0.77,0.22) -- (0.7,0.23) -- (0.87,0.4) -- (0.5,0.2) -- (0.48,0.28) -- (-0.1,0) -- (0.48,-0.28) -- (0.5,-0.2) -- (0.87,-0.4) -- (0.7,-0.23) -- (0.77,-0.22) -- cycle;

    \draw[green!60!black,very thick,rounded corners] (1.45,0) -- (1.23,0.22) -- (1.3,0.23) -- (1.13,0.4) -- (1.5,0.2) -- (1.52,0.28) -- (2.1,0) -- (1.52,-0.28) -- (1.5,-0.2) -- (1.13,-0.4) -- (1.3,-0.23) -- (1.23,-0.22) -- cycle;
\end{tikzpicture}
        \caption{}
        \label{fig:A6}
    \end{subfigure}
    \caption{A facial walk of $\beta_{\mathcal{T}^\ast}(H_\mathcal{T})$ with $H_\mathcal{T}\cong A_5$ (a) and all facial walks of $\beta_{\mathcal{T}^\ast}(H_\mathcal{T})$ with $H_\mathcal{T}\cong A_6$.}
\end{figure}

By the characterisation of Enami \cite{EnamiEmbeddings} together with Lemma~\ref{lemma:notStrong}, Lemma~\ref{lemma:K2mstrong} and Lemma~\ref{lemma:Bstrong} we directly obtain the last part of Theorem~\ref{theorem:main}.
Having established a complete characterization of strong re-embeddings on the Klein bottle, along with an implementation of the corresponding algorithm in \cite{simplicialsurfacegap}, we now investigate the number of $3$-connected cubic planar graphs that admit a strong embedding on the Klein bottle.

Let $\mathcal{G}_n$ again denote the set of $3$-connected cubic planar graphs with $n$ vertices. Moreover, $\mathcal{K}_n$ denotes the set of graphs in $\mathcal{G}_n$ that have strong embeddings on the Klein bottle. Table~\ref{table:Kleinbottle} shows that all graphs in $\mathcal{G}_n$ with $n<20$ have a strong embedding on the Klein bottle. Furthermore, there is only one graph in $\mathcal{G}_{20}$ that does not have a strong embedding on the Klein bottle. This is the dodecahedral graph, which is the dual graph of the icosahedral graph.

\begin{table}[H]
    \centering
    \begin{tabular}[h]{|c|c|c|c|c|c|c|c|c|c|c|c|}
\hline
\textbf{$n$} & \textbf{ 4} &  \textbf{6}& \textbf{8} &\textbf{ 10} &\textbf{ 12} & \textbf{14}&\textbf{16}&\textbf{18}&\textbf{20}\\
\hline
 $\mid\mathcal{G}_n\mid$ & 1& 1& 2& 5& 14& 50& 233& 1249& 7595\\
 \hline
 $\mid\mathcal{K}_n\mid$ & 0& 1& 2& 5& 14& 50& 233& 1249& 7594\\
 \hline
\end{tabular}
\caption{Number of graphs in $\mathcal{G}_n$ for $n\in\{4,6,8,10,12,14,16,18,20\}$ with strong embeddings on the Klein bottle.}
\label{table:Kleinbottle}
\end{table}

\section{Existence of Strong Re-embeddings}\label{sec:exstrong}
In \cite{EnamiEmbeddings}, the author establishes that every $3$-connected cubic planar graph can be embedded on the projective plane, the torus and the Klein bottle, providing both lower and upper bounds on the number of inequivalent embeddings for each of these surfaces. However, a closer examination of the proofs reveals that the lower bounds are not satisfied in the case of strong embeddings. This aligns with the observations based on the data presented in Tables~\ref{table:projective}, \ref{table:torus} and \ref{table:Kleinbottle}.

In the following we show which graphs do not have strong embeddings on the projective plane, the torus or the Klein bottle.
For this, we need the following definition: 
A graph $G$ is \textbf{cyclically $k$-edge connected} if there is no set $A\subseteq E(G)$ of at most $k{-}1$ edges such that the graph $G\setminus A$ has at least two connected components having a cycle.

\begin{corollary}
    Let $G$ be a $3$-connected cubic planar graph with $|V(G)|\geq 5$.
    \begin{itemize}
        \item[1)] If $G$ is bipartite or cyclically $4$-edge connected, then $G$ has no strong embedding on the projective plane.
        \item[2)] If $G$ is cyclically $5$-edge connected, then $G$ has no strong embedding on the torus and the Klein bottle.
        \item[3)] If $G$ is not cyclically $5$-edge connected, but cyclically $4$-edge connected, then $G$ has a strong embedding on the torus.
    \end{itemize}
\end{corollary}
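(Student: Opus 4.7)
The plan is to translate the cyclic edge connectivity hypotheses on $G$ into structural conditions on short cycles of $G^{\ast}$ and then invoke Theorem \ref{theorem:StrongTorus}. The basic dictionary is that a simple $k$-cycle in $G^{\ast}$ is the dual of a minimal $k$-edge cut of $G$, and by Euler's formula applied to the disk bounded by such a cycle, if that disk contains $v_0$ interior vertices of $G^{\ast}$ then it contains exactly $2v_0+k-2$ triangular faces, which equals the number of primal vertices of $G$ on that side of the cut. From this I would extract two equivalences used throughout: \emph{(i)} $G$ is cyclically $4$-edge connected if and only if every $3$-cycle of $G^{\ast}$ bounds a face; and \emph{(ii)} assuming \emph{(i)}, $G$ is cyclically $5$-edge connected if and only if additionally every $4$-cycle of $G^{\ast}$ admits a chord in $G^{\ast}$ that splits one of its sides into two facial triangles.

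For part 1, I would argue by contradiction using Theorem \ref{theorem:StrongTorus}. Assume $G^{\ast}$ has a $K_{2,2m}$-subgraph with non-adjacent size-two partition $\{a_1,a_2\}$. Then for any distinct $i,j\in\{1,\dots,2m\}$, the $4$-cycle $(a_1,b_i,a_2,b_j)$ must, by \emph{(ii)}, have a chord in $G^{\ast}$; as $a_1a_2$ is forbidden, this chord has to be $b_ib_j$, so $\{b_1,\dots,b_{2m}\}$ induces $K_{2m}$ in $G^{\ast}$. For $m=1$ this contradicts the $\{b_1,b_2\}$ non-adjacency also required by the theorem, and for $m\geq 2$ the induced subgraph on those $2m+2$ vertices contains at least $4m+\binom{2m}{2}=2m^2+3m$ edges, which exceeds the planar bound $3(2m+2)-6=6m$. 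Similarly, if $G^{\ast}$ contained a $K_{2,2,2}$-subgraph, then each of the three bipartition $4$-cycles $(a_1,b_1,a_2,b_2)$, $(a_1,c_1,a_2,c_2)$, $(b_1,c_1,b_2,c_2)$ would force at least one within-partition edge in $G^{\ast}$, and pigeonhole gives at least two such extra edges, producing $\geq 14$ edges on $6$ vertices, again violating planarity.

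For part 2, the hypothesis supplies a cyclic $4$-edge cut in $G$, which dualises to a $4$-cycle $C=x_1x_2x_3x_4$ in $G^{\ast}$ with at least one interior vertex on each side. I would then show that $C$ has no chord in $G^{\ast}$: if, say, $x_1x_3$ were a chord, then by \emph{(i)} the two $3$-cycles $x_1x_2x_3$ and $x_1x_3x_4$ would have to bound faces of $G^{\ast}$, entirely filling one side of $C$ without any interior vertex and contradicting cyclicity of the cut. Hence $C$ is an induced $4$-cycle in $G^{\ast}$, which is precisely a $K_{2,2}$-subgraph of $G^{\ast}$ with both size-two partitions non-adjacent, and the $m=1$ case of Theorem \ref{theorem:StrongTorus} yields the desired strong re-embedding on the torus.

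The main obstacle I anticipate is the careful formulation of equivalence \emph{(ii)}: merely having a chord in $G^{\ast}$ does not automatically make the corresponding $4$-edge cut non-cyclic, since the chord might lie on the side of the $4$-cycle that still contains interior vertices. One must exploit the planarity of $G^{\ast}$ together with the Euler-formula count above to locate the chord on the vertex-free side and to ensure that the two resulting triangles are indeed faces of $G^{\ast}$. Once \emph{(i)} and \emph{(ii)} are in place, the remaining arguments reduce to short planarity estimates and a direct invocation of Theorem \ref{theorem:StrongTorus}.
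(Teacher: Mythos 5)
Your argument is correct, and it is genuinely more self-contained than the paper's proof, which simply defers to Proposition 18 of \cite{EnamiEmbeddings} and adds the single observation that cyclic $4$-edge connectivity forces the size-two partition sets of a $K_{2,2}$ to be non-adjacent in $G^{\ast}$. The common core of both routes is the dictionary between bonds of $G$ and cycles of the triangulation $G^{\ast}$, together with the Euler count showing that a side of a dual $k$-cycle carries a cycle of $G$ exactly when it encloses an interior vertex of $G^{\ast}$; the paper leaves this entirely to Enami, whereas you make it explicit as your equivalences \emph{(i)} and \emph{(ii)} and then run the whole proof inside $G^{\ast}$. What your version buys is a complete, checkable argument: in part 1 the chord forced by \emph{(ii)} must be a diagonal of each bipartite $4$-cycle, which either contradicts the non-adjacency hypothesis of Theorem \ref{theorem:StrongTorus} (the $m=1$ case), or produces too many edges for planarity (the $K_{2,2m}$ with $m\geq 2$ and $K_{2,2,2}$ cases); in part 2 your facial-triangle argument is exactly the justification of the paper's unproved remark that the dual $4$-cycle coming from a cyclic $4$-edge cut is induced. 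The only points worth tightening in a written version are the ones you already flag: the reduction of an arbitrary cyclic $\leq 4$-edge cut to a bond of size exactly $4$ (using $3$-connectivity and cyclic $4$-edge connectivity to rule out smaller or disconnected-sided cuts), and the observation that a non-facial triangle of a triangulation on at least four vertices is separating, which is what makes \emph{(i)} an equivalence.
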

\begin{proof}
    \begin{itemize}
        \item[1)] Based on Theorem \ref{theorem:main} we know that $G^\ast$ must have a subgraph isomorphic to $K_4$ to ensure that $G$ can be strongly embedded on the projective plane. If $G$ is bipartite then $G^\ast$ is an even triangulation which does not have a subgraph isomorphic to $K_4$. If $G$ is cyclically $4$-edge connected then $G^\ast$ has no separating cycle of length three and so $G^\ast$ does not have a subgraph isomorphic to $K_4$.
        \item[2)] In Theorem~\ref{theorem:main} the subgraphs of $G^\ast$ that are required to obtain a strong embedding of $G$ on the torus and the Klein bottle are characterised.
        The graphs $K_{2,2,2},A_3,A_5$ and $A_6$ all contain a separating cycle of length four. Hence, if $G^\ast$ has a subgraph isomorphic to one of these graphs, then removing the dual edges of this cycle show that $G$ is not cyclically 5-edge connected. 
        In the proofs of Corollaries 19 and 20 in \cite{EnamiEmbeddings} it is shown that $G^\ast$ cannot have a subgraph isomorphic to $K_{2,m}$ for $m\geq 3$. Thus, the only remaining possibility is that $G^\ast$ contains a subgraph isomorphic to $K_{2,2}$. Since $G$ is cyclically 5-edge connected, no vertex of $G^\ast$ can lie in the interior of this $K_{2,2}$. This would require two vertices of the same partition set of the $K_{2,2}$ to be adjacent, which is incompatible with a strong embedding, as shown in Lemma~\ref{lemma:K2mstrong}. Hence, $G$ has no strong embedding on the torus and the Klein bottle.
        \item[3)] Since $G$ is not cyclically $5$-edge connected but cyclically 4-edge connected, there exist four edges whose removal disconnects the graph into exactly two components, each of which contains a cycle. Let denote the set of these four edges by $\mathcal{T}$. Thus, $H_\mathcal{T}$ has to be isomorphic to $K_{2,2}$, where the vertices in the same partition set are not adjacent in $G^\ast$. Then, $G$ has a strong embedding on the torus defined by $H_\mathcal{T}$. This proof proceeds analogously to the proofs of Corollaries 19 and 20 in \cite{EnamiEmbeddings}.
    \end{itemize}
\end{proof}

Since the dodecahedral graph is the smallest graph that is cyclically 5-edge connected, it is consequently the smallest graph that fails to admit a strong embedding on the Klein bottle.
Since Enami shows in \cite{EnamiEmbeddings} that an upper bound on the number of subgraphs isomorphic to $K_4$ in a triangulation exists, we get an upper bound for strong re-embeddings on the projective plane.

\begin{corollary}
    A $3$-connected cubic planar graph $G$ has at most $\frac{\vert V(G)\vert}{2}-1$ inequivalent strong embeddings on the projective plane.
\end{corollary}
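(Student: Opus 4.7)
The plan is to combine Theorem \ref{theorem:StrongProj} with the upper bound on the number of $K_4$ subgraphs in a planar triangulation established by Enami in \cite{EnamiEmbeddings}. By Theorem \ref{theorem:StrongProj}, counting strong re-embeddings of $G$ on the projective plane is the same as counting subgraphs of $G^{\ast}$ isomorphic to $K_4$, so the whole task reduces to bounding the latter quantity in terms of $|V(G)|$.

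First I would identify $G^{\ast}$ as a planar triangulation. Since $G$ is $3$-connected, cubic and planar, every vertex of $G$ has degree $3$, which means every face of $G^{\ast}$ is bounded by exactly three edges; together with $3$-connectedness this yields that $G^{\ast}$ is a simple planar triangulation. Next I would translate the number of vertices: from $|E(G)| = \tfrac{3}{2}|V(G)|$ and Euler's formula $|V(G)| - |E(G)| + |F(G)| = 2$, the number of faces of $G$ is $\tfrac{|V(G)|}{2} + 2$, and this equals $|V(G^{\ast})|$.

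Then I would invoke Enami's bound stating that a planar triangulation on $n$ vertices contains at most $n-3$ subgraphs isomorphic to $K_4$. Applied to $G^{\ast}$ with $n = \tfrac{|V(G)|}{2} + 2$, this yields at most $\tfrac{|V(G)|}{2} + 2 - 3 = \tfrac{|V(G)|}{2} - 1$ such subgraphs, which by Theorem \ref{theorem:StrongProj} gives the desired bound on the number of strong re-embeddings.

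The main conceptual step is the invocation of Enami's triangulation bound; everything else is a routine translation between $G$ and $G^{\ast}$. There is no real obstacle, as both ingredients — the one-to-one correspondence from Theorem \ref{theorem:StrongProj} and the bound on $K_4$-subgraphs from \cite{EnamiEmbeddings} — are already available, so the corollary is essentially a direct combination. The only care needed is to make sure the cited bound is indeed formulated for simple planar triangulations (which $G^{\ast}$ is), and that the counting is performed over subgraphs rather than up to isomorphism, consistent with Remark~\ref{remark:11correspondence}.
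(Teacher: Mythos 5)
Your proposal is correct and matches the paper's own proof essentially step for step: both invoke Theorem \ref{theorem:StrongProj} to reduce the count to $K_4$-subgraphs of $G^{\ast}$, note that $G^{\ast}$ is a triangulation with $\frac{1}{2}\vert V(G)\vert+2$ vertices, and apply Enami's bound of $\vert V(T)\vert-3$ on $K_4$-subgraphs of a spherical triangulation $T$. No gaps.
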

\begin{proof}
    In \cite[Lemma 15]{EnamiEmbeddings}, the author shows that every triangulation $T$ on the sphere has at most $\vert V(T)\vert -3$ subgraphs which are isomorphic to $K_4$. The dual graph $G^{\ast}$ is a triangulation and $\vert V(G^{\ast})\vert=\frac{1}{2}\vert V(G)\vert+2$. Thus, we obtain that $G^{\ast}$ has at most $\frac{\vert V(G)\vert}{2}-1$ subgraphs which are isomorphic to $K_4$. The statement follows with Theorem~\ref{theorem:main}.
\end{proof}

As noted in \cite{EnamiEmbeddings}, all embeddings of a given $3$-connected cubic planar graph on the projective plane can be computed in polynomial time and thus also all strong embeddings. This raises the question of whether a similar computational efficiency holds for strong embeddings on the torus and the Klein bottle. Such efficiency is achievable provided that the number of relevant subgraphs in the dual graph grows polynomially with the size of the graph.

Let $G$ be a $3$-connected cubic planar graph, as usual.
Enami shows in \cite{EnamiEmbeddings} that if the dual graph $G^{\ast}$ contains a subgraph isomorphic to $K_{1,1,m}$, then $G$ admits at least $2^m-1$ inequivalent embeddings on both the torus and the Klein bottle. However, the subgraphs isomorphic to $K_{1,1,m}$ for $m\geq 1$ and those isomorphic to $K_{2,m}$ for $m\geq 1$, as constructed in Enami's proof, do not correspond to strong embeddings. Consequently, that argument does not extend to the case of strong embeddings. Nevertheless, as the next theorem shows, there can still be exponentially many inequivalent strong embeddings on the torus and the Klein bottle.

\begin{theorem}
    There exists a $3$-connected cubic planar graph $G$ with exponentially many inequivalent strong embeddings on the torus and the Klein bottle in the size of $G$.
\end{theorem}
\begin{proof}
    Let $G$ be the $3$-connected cubic planar graph consisting of an inner and an outer cycle each of length $2m$ for $m\geq 2$, where the two cycles are connected as shown in Figure~\ref{fig:GExponential}. Then $G^{\ast}$ is isomorphic to $K_{2,2m}$, where the vertices of the partition set of size $2m$ are adjacent such that they form a cycle. The number of subgraphs isomorphic to $K_{2,m}$ in $K_{2,2m}$ is $\binom{2m}{m}$. With Stirling's approximation we get that $\binom{2m}{m}$ is asymptotically equivalent to $\frac{1}{\sqrt{\pi m}}4^m$. 
    Thus $G^{\ast}$ has exponentially many subgraphs isomorphic to $K_{2,m}$. This directly implies an exponential number of strong embeddings on the torus and the Klein bottle of $G$, depending on whether $m$ is even or odd, by Theorem~\ref{theorem:main}. Therefore, the upper bounds for strong embeddings of $3$-connected cubic planar graphs on the torus and the Klein bottle is exponential.
\end{proof}
\begin{figure}[H]
    \centering
    \begin{tikzpicture}[scale=2]
        \tikzset{knoten/.style={circle,fill=black,inner sep=0.7mm}}
        \node [knoten] (a1) at (0,0.15) {};
        \node [knoten] (b1) at (0.8,0.75) {};
        \node [knoten] (c1) at (0.8,1.5) {};
        \node [knoten] (d1) at (-0.8,0.75) {};
        \node [knoten] (e1) at (-0.8,1.5) {};

        \node [knoten] (a2) at (0,0.6) {};
        \node [knoten] (b2) at (0.5,1) {};
        \node [knoten] (c2) at (0.5,1.5) {};
        \node [knoten] (d2) at (-0.5,1) {};
        \node [knoten] (e2) at (-0.5,1.5) {};
        
        \node (u) at (0,1.8) {$\dots$};
        
        \draw[-,thick] (a1) to (b1);
        \draw[-,thick] (b1) to (c1);
        \draw[-,thick] (a1) to (d1);
        \draw[-,thick] (d1) to (e1);

        \draw[-,thick] (a2) to (b2);
        \draw[-,thick] (b2) to (c2);
        \draw[-,thick] (a2) to (d2);
        \draw[-,thick] (d2) to (e2);

        \draw[-,thick] (a1) to (a2);
        \draw[-,thick] (b1) to (b2);
        \draw[-,thick] (c1) to (c2);
        \draw[-,thick] (d1) to (d2);
        \draw[-,thick] (e1) to (e2);

        \draw[-,thick] (e1) to (-0.5,1.75);
        \draw[-,thick] (e2) to (-0.23,1.7);

        \draw[-,thick] (c1) to (0.5,1.75);
        \draw[-,thick] (c2) to (0.23,1.7);
    \end{tikzpicture}
    \caption{Graph $G$ with two connected cycles of length $2m$}
    \label{fig:GExponential}
\end{figure}

\section{Conclusion}
In this paper, we have characterised strong graph embeddings of $3$-connected cubic planar graphs on the projective plane, the torus and the Klein bottle. These results enable us to make statements about the existence or non-existence of strong embeddings of a given $3$-connected cubic planar graph on one of these three surfaces.
Compared to Enami's results for general graph embeddings, we have shown that for strong embeddings we need to consider fewer types of subgraphs of the dual graph. This makes the calculation simpler. It may be possible to extend the approach to surfaces of higher genus, but it will be more complicated.
The results presented here serve as a foundation for further significant developments in the theory of strong embeddings. In particular, we demonstrate in a forthcoming paper—building on the current work—that Apollonian duals precisely characterize the class of $3$-connected cubic planar graphs admitting a unique strong embedding on orientable surfaces.

\section*{Acknowledgements}
We gratefully acknowledge the funding by the Deutsche Forschungsgemeinschaft (DFG, German Research Foundation) in the framework of the Collaborative Research Centre CRC/TRR 280 “Design Strategies for Material-Minimized Carbon Reinforced Concrete Structures – Principles of a New Approach to Construction” (project ID 417002380).
The authors also thank Reymond Akpanya for his very valuable comments and advice. We also thank the anonymous reviewers for their suggestions, which were very helpful.

\bibliography{lit.bib}

@article{EnamiEmbeddings,
    AUTHOR = {Enami, Kengo},
     TITLE = {Embeddings of 3-connected 3-regular planar graphs on surfaces
              of non-negative {E}uler characteristic},
   JOURNAL = {Discrete Math. Theor. Comput. Sci.},
  FJOURNAL = {Discrete Mathematics \& Theoretical Computer Science. DMTCS.},
    VOLUME = {21},
      YEAR = {2019},
    NUMBER = {4},
     PAGES = {Paper No. 11, 19},
      ISSN = {1365-8050},
   MRCLASS = {05C10},
  MRNUMBER = {4026094},
MRREVIEWER = {Yian\ Xu},
}

@book{GraphsOnSurfaces,
  author = {Mohar, Bojan and Thomassen, Carsten},
  publisher = {Johns Hopkins University Press},
  title = {Graphs on Surfaces.},
  year = 2001
}

@book {TopologicalGraphTheory,
    AUTHOR = {Gross, Jonathan L. and Tucker, Thomas W.},
     TITLE = {Topological graph theory},
    SERIES = {Wiley-Interscience Series in Discrete Mathematics and
              Optimization},
      NOTE = {A Wiley-Interscience Publication},
 PUBLISHER = {John Wiley \& Sons, Inc., New York},
      YEAR = {1987},
     PAGES = {xvi+351},
      ISBN = {0-471-04926-3},
   MRCLASS = {05C10 (20F32 57M15)},
  MRNUMBER = {898434},
MRREVIEWER = {Saul\ Stahl},
}

@article{MoharStrong,
    AUTHOR = {Mohar, Bojan},
     TITLE = {Strong embeddings of minimum genus},
   JOURNAL = {Discrete Math.},
  FJOURNAL = {Discrete Mathematics},
    VOLUME = {310},
      YEAR = {2010},
    NUMBER = {20},
     PAGES = {2595--2599},
      ISSN = {0012-365X,1872-681X},
   MRCLASS = {05C10},
  MRNUMBER = {2672206},
MRREVIEWER = {Eckhard\ Steffen},
       DOI = {10.1016/j.disc.2010.03.019},
       URL = {https://doi.org/10.1016/j.disc.2010.03.019},
}

@incollection{CycleDoubleCoverConjecture,
    AUTHOR = {Jaeger, Francois},
     TITLE = {A survey of the cycle double cover conjecture},
 BOOKTITLE = {Cycles in graphs ({B}urnaby, {B}.{C}., 1982)},
    SERIES = {North-Holland Math. Stud.},
    VOLUME = {115},
     PAGES = {1--12},
 PUBLISHER = {North-Holland, Amsterdam},
      YEAR = {1985},
      ISBN = {0-444-87803-3},
   MRCLASS = {05C38},
  MRNUMBER = {821502},
MRREVIEWER = {Charles\ H. C. Little},
       DOI = {10.1016/S0304-0208(08)72993-1},
       URL = {https://doi.org/10.1016/S0304-0208(08)72993-1},
}

@article{Whitney,
    AUTHOR = {Whitney, Hassler},
     TITLE = {2-{I}somorphic {G}raphs},
   JOURNAL = {Amer. J. Math.},
  FJOURNAL = {American Journal of Mathematics},
    VOLUME = {55},
      YEAR = {1933},
    NUMBER = {1-4},
     PAGES = {245--254},
      ISSN = {0002-9327,1080-6377},
   MRCLASS = {99-04},
  MRNUMBER = {1506961},
       DOI = {10.2307/2371127},
       URL = {https://doi.org/10.2307/2371127},
}

@manual{GAP4,
    organization = "The GAP~Group",
    title        = "{GAP -- Groups, Algorithms, and Programming,
                    Version 4.15.1}",
    year         = 2025,
    url          = "\url{https://www.gap-system.org}",
    }

@article {MOHAR199687,
    AUTHOR = {Mohar, Bojan and Robertson, Neil},
     TITLE = {Planar graphs on nonplanar surfaces},
   JOURNAL = {J. Combin. Theory Ser. B},
  FJOURNAL = {Journal of Combinatorial Theory. Series B},
    VOLUME = {68},
      YEAR = {1996},
    NUMBER = {1},
     PAGES = {87--111},
      ISSN = {0095-8956,1096-0902},
   MRCLASS = {57M15 (05C10)},
  MRNUMBER = {1405707},
       DOI = {10.1006/jctb.1996.0058},
       URL = {https://doi.org/10.1006/jctb.1996.0058},
}

@article{RICHTER1994273,
    AUTHOR = {Richter, R. B. and Seymour, P. D. and Širáň, J.},
     TITLE = {Circular embeddings of planar graphs in nonspherical surfaces},
   JOURNAL = {Discrete Math.},
  FJOURNAL = {Discrete Mathematics},
    VOLUME = {126},
      YEAR = {1994},
    NUMBER = {1-3},
     PAGES = {273--280},
      ISSN = {0012-365X,1872-681X},
   MRCLASS = {05C10},
  MRNUMBER = {1264494},
MRREVIEWER = {Carsten\ Thomassen},
       DOI = {10.1016/0012-365X(94)90271-2},
       URL = {https://doi.org/10.1016/0012-365X(94)90271-2},
}

@article{JordanCurve,
author = {J. R. Kline},
title = {What is the {J}ordan {C}urve {T}heorem?},
journal = {The American Mathematical Monthly},
volume = {49},
number = {5},
pages = {281-286},
year = {1942},
publisher = {Taylor & Francis},
doi = {10.1080/00029890.1942.11991225},
}

@misc{ simplicialsurfacegap ,   
author ={Niemeyer,  A.  and  Baumeister, M. and Akpanya, R. and G{\"o}rtzen,  T. and Wei{\ss}, M. and Schnelle, L. and GAP Team},   
title = {{SimplicialSurfaces}, {V}ersion 0.7}, 
year ={2025},   
note =             {GAP package},   
howpublished =     {\href                       {https://github.com/gap-packages/SimplicialSurfaces}                       {\texttt{https://github.com/}\discretionary                       {}{}{}\texttt{gap\texttt{\symbol{45}}packages/}\discretionary                       {}{}{}\texttt{SimplicialSurfaces}}}
}

@misc{weißakpanya,
      title={On 3-Connected Cubic Planar Graphs and their Strong Embeddings on Orientable Surfaces}, 
      author={Meike Weiß and Reymond Akpanya and Alice C. Niemeyer},
      year={2025},
      eprint={2509.14964},
      archivePrefix={arXiv},
      primaryClass={math.CO},
      url={https://arxiv.org/abs/2509.14964}, 
}

@article {MR1375105,
    AUTHOR = {Mohar, Bojan and Robertson, Neil and Vitray, Richard P.},
     TITLE = {Planar graphs on the projective plane},
   JOURNAL = {Discrete Math.},
  FJOURNAL = {Discrete Mathematics},
    VOLUME = {149},
      YEAR = {1996},
    NUMBER = {1-3},
     PAGES = {141--157},
      ISSN = {0012-365X,1872-681X},
   MRCLASS = {05C10},
  MRNUMBER = {1375105},
MRREVIEWER = {Xiaoya\ Zha},
       DOI = {10.1016/0012-365X(94)00349-N},
       URL = {https://doi.org/10.1016/0012-365X(94)00349-N},
}

@article {Dillencourt,
    AUTHOR = {Dillencourt, Michael B.},
     TITLE = {Polyhedra of small order and their {H}amiltonian properties},
   JOURNAL = {J. Combin. Theory Ser. B},
  FJOURNAL = {Journal of Combinatorial Theory. Series B},
    VOLUME = {66},
      YEAR = {1996},
    NUMBER = {1},
     PAGES = {87--122},
      ISSN = {0095-8956,1096-0902},
   MRCLASS = {52B10 (05C30 05C45)},
  MRNUMBER = {1368518},
MRREVIEWER = {Ilda\ P. F. da Silva},
       DOI = {10.1006/jctb.1996.0008},
       URL = {https://doi.org/10.1006/jctb.1996.0008},
}

\end{document}